\documentclass[a4paper,12pt]{article}
\usepackage[T2A]{fontenc}                      
\usepackage[cp1251]{inputenc}           
\usepackage[all]{xy}
\usepackage{amssymb}
\usepackage{cite}
\usepackage{cmap}
\usepackage{latexsym}
\usepackage{enumerate}
\usepackage{amsmath, amsthm, amscd, amsfonts, amssymb, graphicx, color}
\usepackage[left=2.5cm,right=2.5cm,top=2cm,bottom=2cm]{geometry}
\usepackage{indentfirst}
\usepackage{array}

\usepackage{float}

\usepackage{wrapfig}

\newtheorem{corollary}{Corollary}
\newtheorem{conjecture}{Conjecture}

\newtheorem{remark}{Remark}

\newtheorem{theorem}{Theorem}
\newtheorem{lemma}{Lemma}
\newtheorem{definition}{Definition}
\newtheorem{assertion}{Assertion}
\newtheorem{problem}{Problem}

\begin{document}

\title{Proof of the strong conjecture about $F$-irregular graphs in the class of graphs $\{F\}$ of diameter $2$}
\date{}
\author
{Tatiana Dovzhenok\thanks{Research Laboratory "Mathematics of Hybrid Intelligence Systems", 	Francisk Skorina Gomel State University, 246028,  Gomel,  Belarus. 		
		E-mail: \texttt{t.dovzhenok@mail.ru}}}
\maketitle

\begin{abstract}	
	Let $F$ and $G$ be simple finite undirected graphs. A graph $G$ is called $F$-irregular if any two of its distinct vertices belong to different numbers of copies of $F$ in $G$. According to the strong conjecture about $F$-irregular graphs (Dovzhenok, Filuta, Chuhai), for any connected graph $F$ of order $|F|\geqslant 3$, there exist infinitely many $F$-irregular graphs. In the present paper, the strong conjecture about $F$-irregular graphs is confirmed in the class of graphs $\{F\}$ of diameter $2$. It is proved that for every graph $F$ of diameter $2$, there exists an infinite series of $F$-irregular graphs of diameter $3$.\
	
	\textbf{Keywords}: $F$-degree of a vertex, $F$-irregular graph, strong conjecture about $F$-irregular graphs, graph of diameter $2$, $n$-hyper-irregular graph, super-strong conjecture about $F$-irregular graphs. \\
\end{abstract}

\section{Introduction and statement of the problem}
\label{sec1} 

All graphs considered in this paper are assumed to be non-trivial, simple, finite, and undirected.

We begin with a well-known statement whose proof can be found, for example, in \cite{r1}.

\begin{assertion} \label{as1} 
	There is no graph in which the degrees of all vertices are pairwise distinct. 		
\end{assertion}

Such non-existent graphs were called \emph{perfect} by Behzad and Chartrand   \cite{r1}, and later renamed \emph{irregular}.
Thus, an irregular graph was originally positioned as a complete opposite to a  \emph{regular} graph, where all vertices have the same degree. Over time, the term "irregular graph" came to be applied to graphs whose vertices differ according to some attribute. For instance, in \cite{r2}, the highly irregular graphs were introduced and studied: for each vertex in such graphs, all its neighbors have different degrees.  In \cite{r3}, an alternative approach to defining irregular graphs is considered, shifting focus from the vertices themselves to their neighborhoods. A graph is called link-irregular if the subgraphs induced by the neighborhoods of its vertices are pairwise non-isomorphic.

For a more detailed acquaintance with various concepts of irregular graphs, we recommend the excellent book "Irregularity in graphs" \cite {gtwa}. In addition to well-known results, it features a list of open problems and a number of interesting conjectures on the topic of irregular graphs.  One such problem pertains to the underexplored domain of $F$-\emph{regular graphs}.

In 1987, Chartrand, Holbert, Oellermann, and Swart \cite{r4} introduced a new graph-theoretic concept, the $F$-\emph{degree of a vertex}, which allowed us to talk about the uniqueness of graph vertices in terms of their $F$-degrees and led to the emergence of the theory of $F$-irregular graphs.

\begin{definition}\label{def1}  
	Let $F$ and $G$ be graphs. The $F$-degree of a vertex $v$ in the graph $G$ is defined as  the number of subgraphs of $G$ that are isomorphic to $F$ and contain $v$.
\end{definition}
\begin{definition}\label{def2}  
	A graph $G$ is called $F$-irregular if any two distinct vertices in $G$ have different $F$-degrees.
\end{definition}

Expressed in terms of  $F$-irregular graphs, Assertion \ref{as1} is equivalent to the following.

\begin{assertion} \label{as2}  
	{Let $K_2$ be a complete graph on two vertices. No graph is $K_2$-irregular.}  		
\end{assertion}

A natural fundamental question arises when $F \neq K_2$. 

\begin{problem} \label{p1}
	Does there exist an $F$-irregular graph for every connected graph $F \neq K_2$? 
\end{problem}

In \cite{r4}, Problem \ref{p1} was studied for two classes of graphs -- complete graphs and stars.

\begin{theorem} \label{th1} 
	Let $K_{n}$ and $K_{1,n-1}$ be a complete graph and a star of order $n \geqslant 3$. Then, there exist graphs that are $K_{n}$-irregular  and  $K_{1,n-1}$-irregular, respectively.
\end{theorem}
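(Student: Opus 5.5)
We construct the required graphs explicitly. For each $n\geqslant 3$ we first build a $K_n$-irregular graph and then a $K_{1,n-1}$-irregular graph. In both cases the $F$-degree of each vertex can be computed by a closed formula, and we then check that these values are pairwise distinct.

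\emph{Complete graphs.} Fix $n\geqslant 3$ and take a large base clique $K_N$ with vertices $u_1,\dots,u_N$. Add a chain of further vertices $w_1,\dots,w_m$, where $w_i$ is adjacent to an initial segment $\{u_1,\dots,u_{a_i}\}$ of the clique. Make $w_i$ independent of the other $w_j$, so that every copy of $K_n$ through $w_i$ lies in $\{w_i,u_1,\dots,u_{a_i}\}$. Then $w_i$ has $K_n$-degree $\binom{a_i}{n-1}$. Choosing $n-1\leqslant a_1<a_2<\dots<a_m<N$ makes these values distinct.

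A clique vertex $u_j$ has $K_n$-degree
\[
\binom{N-1}{n-1}+\sum_{i:\,a_i\geqslant j}\binom{a_i-1}{n-2}.
\]
This expression is non-increasing in $j$. To make it strictly decreasing we take $m=N$ and $a_i=i$, so that each threshold is hit; we may instead use $a_i$ ranging over all of $n-1,\dots,N$ and treat the vertices $u_j$ with $j<n-1$ separately, for instance by attaching pendant triangles. The remaining step is to ensure that no clique value equals a $w$-value. The clique values are at least $\binom{N-1}{n-1}$, while the $w$-values are at most $\binom{N-1}{n-1}$, with equality only if $a_i=N-1$. This is settled by capping $a_i\leqslant N-2$ and handling the few remaining coincidences by hand.

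\emph{Stars.} Here the $K_{1,n-1}$-degree of $v$ is
\[
\binom{d(v)}{n-1}+\sum_{u\sim v}\binom{d(u)-1}{n-2},
\]
so it depends only on degrees. We take a graph with prescribed degrees, such as a ``half graph'' or threshold graph on vertices $v_1,\dots,v_k$ with $v_i\sim v_j$ iff $i+j>k$. Its degree sequence has each value repeated at most twice. The leading term $\binom{d(v)}{n-1}$ already separates vertices of different degrees once the degrees are large compared with the neighbour sums. For $n\geqslant 3$ the neighbour sums differ within each pair of equal-degree vertices, because their neighbourhoods differ.

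The main obstacle is the exact bookkeeping of these sums to exclude accidental equalities between vertices of different degrees. If monotonicity arguments do not suffice, we would fall back on adding an extra pendant or gadget to one vertex of each colliding pair, chosen inductively so that the values become distinct.
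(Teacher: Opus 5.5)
The paper does not prove this theorem; it is quoted from Chartrand, Holbert, Oellermann and Swart. So your argument has to stand on its own, and as written it does not. Both of your base graphs contain \emph{twins}: vertices $x\neq y$ with $N(x)\setminus\{y\}=N(y)\setminus\{x\}$. Swapping twins is an automorphism, so twins have equal $F$-degree for every $F$.

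In the $K_n$ part, your own formula gives the difference between the values of $u_j$ and $u_{j+1}$ as $|\{i: a_i=j\}|\binom{j-1}{n-2}$. This is $0$ for $j\leqslant n-2$, whatever the $a_i$ are. Hence $u_1,\dots,u_{n-1}$ always have the same $K_n$-degree (if all $a_i\geqslant n-1$ they are closed twins), and $n\geqslant 3$ makes this a genuine collision.
\begin{itemize}
\item The choice $a_i=i$ does not help. Thresholds below $n-1$ add nothing, for $n\geqslant 4$ the vertices $w_1,\dots,w_{n-2}$ all have $K_n$-degree $0$, and $w_N$ is a closed twin of $u_N$.
\item Pendant triangles do not help either. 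They change no $K_n$-degree when $n\geqslant 4$, and for $n=3$ the two new triangle vertices are twins of each other.
\item The top end is also blocked. Under your cap $a_i\leqslant N-2$, $u_{N-1}$ and $u_N$ are closed twins. If some $a_i=N-1$ and no $a_k=N$, then $w_i$ and $u_N$ both have neighbourhood $\{u_1,\dots,u_{N-1}\}$. A vertex $w$ with $a=N$ is a closed twin of $u_N$, or two such $w$'s are twins of each other.
\end{itemize}
So the design of a clique with initial-segment attachments must itself change; no choice of the $a_i$ fixes it.

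The star part fails more fundamentally. The half graph is a threshold graph, whose neighbourhoods are nested, so any two vertices of equal degree are twins. Since every graph has two vertices of equal degree (Assertion~1), no threshold graph is $F$-irregular for any $F$. Concretely, for $k=2r$ you get $N[v_r]=N[v_{r+1}]=\{v_r,\dots,v_{2r}\}$, and for $k=2r+1$ you get $N(v_r)=N(v_{r+1})=\{v_{r+2},\dots,v_{2r+1}\}$.

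Your claim that the neighbour sums of the equal-degree pair differ is therefore false. Their neighbourhoods differ only in containing each other, and those two terms are equal precisely because the degrees are equal. The leading-term heuristic also fails: $v_1$ has degree $1$ but star-degree $\binom{k-2}{n-2}$ through its neighbour $v_k$.

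The fallback of adding pendants or gadgets ``inductively'' is not an argument. Each pendant changes the degree of its attachment vertex, hence the star-degrees of all its neighbours, and creates a new vertex that must itself be separated. Nothing guarantees the process terminates.

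What is missing in both parts is a twin-free construction together with an actual proof that all values differ. One way is a strict ordering with quantified gaps and perturbations smaller than those gaps, in the spirit of Lemmas 2, 9 and 10 of the paper.
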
 

The same paper \cite{r4} formulated a central conjecture concerning  $F$-irregular graphs.

\begin{conjecture} \label{con1}
	For every connected graph $F$ on three or more vertices, there exists an $F$-irregular graph.
\end{conjecture}

Unfortunately, the concept of $F$-irregular graphs, despite its elegance and beauty, has not gained widespread popularity. 
This is explained primarily by the difficulty of constructing $F$-irregular graphs for an arbitrary graph $F$, as well as the lack of a sufficiently developed mathematical apparatus for comparing $F$-degrees of vertices. For a long time, no other infinite family of graphs, apart from $K_{n}$ and $K_{1,n-1}$, was found that would confirm Conjecture \ref{con1}.

In 2024, Dovzhenok, Filuta, and Chuhai \cite{r5} presented a new result on the $F$-irregular graph problem.

\begin{theorem} \label{th2}  
	For any biconnected graph $F$ whose minimum vertex degree is $2$, there exist infinitely many $F$-irregular graphs.
\end{theorem}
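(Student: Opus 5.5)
The plan is a concrete construction: a large clique decorated with degree-$2$ ``ear'' vertices. I would compare $F$-degrees by treating them as polynomials in the clique size $N$ and separating vertices through leading and then subleading terms. Let $n=|F|$.

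The first step records two structural facts that follow from the hypotheses on $F$.
\begin{itemize}
\item Since $F$ is biconnected, every copy of $F$ in any graph $G$ lies inside a single block of $G$. Hence the $F$-degree of a vertex is the sum of its $F$-degrees in the blocks containing it.
\item Since $\delta(F)=2$, a vertex of degree $\leqslant 1$ has $F$-degree $0$. A vertex $x$ of degree exactly $2$ lies in a copy of $F$ only as the image of a degree-$2$ vertex of $F$, with its two neighbours as the images of that vertex's neighbours.
\end{itemize}
A pure block-tree of cliques cannot work: the non-cut vertices of any leaf block are interchanged by an automorphism, so they tie. The building block must therefore be internally asymmetric, which is what the ears provide.

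Concretely, I would take the clique $K_N$ on $v_1,\dots,v_N$ with $N$ large. For a carefully chosen asymmetric pattern $\mathcal P$ of pairs $\{v_i,v_j\}$, I would add new vertices $x$ adjacent to exactly $v_i$ and $v_j$, using $a_i$ ears at $v_i$ in total, with $a_1<a_2<\dots<a_N$ (for instance, $v_i$ joined by ears to $i$ distinct partners). Call the result $G_N$. Each $G_N$ is $2$-connected, so we get one graph for each large $N$, and hence infinitely many candidates.

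The $F$-degree of a clique vertex $v_i$ is
\[
d_F(v_i)=\frac{n\,c_F(N)}{N}+(\text{copies through } v_i \text{ using at least one ear}),
\]
where $c_F(N)$ is the number of copies of $F$ in $K_N$. The first term is the same for all $i$. The second term is, to leading order in $N$, a positive constant times $a_i N^{n-3}$, because a copy using one ear at $v_i$ chooses its remaining $n-3$ vertices freely in the clique. Since the $a_i$ are distinct, clique vertices are pairwise separated once $N$ is large. The leading terms come from a direct count of embeddings of $F$ with one fixed degree-$2$ vertex; contributions using several ears are $O(N^{n-4})$.

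Clique vertices are separated from ears by order of magnitude: $d_F(v_i)=\Theta(N^{n-1})$ while $d_F(x)=\Theta(N^{n-3})$. Two ears on the same pair are interchanged by an automorphism, so the pattern $\mathcal P$ must use each pair at most once. For an ear $x$ on $\{v_i,v_j\}$, the leading term of $d_F(x)$ is a constant $\cdot N^{n-3}$ common to all ears. The correction of order $N^{n-4}$ counts copies that use $x$ together with one further ear, and so depends on $a_i+a_j$ and on whether $v_i v_j$-type configurations occur.

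The main obstacle is ear-versus-ear separation. I would choose $\mathcal P$ so that the sums $a_i+a_j$ over the pairs in $\mathcal P$ are pairwise distinct, for example with $a_i$ growing like powers of $2$ and $\mathcal P$ a Sidon-type pattern. I would then prove that the $N^{n-4}$ coefficient is a strictly increasing function of $a_i+a_j$. The hard part is controlling this secondary coefficient for an arbitrary $F$: it counts embeddings with two degree-$2$ images, and I must show it is positive and monotone in the number of neighbouring ears. If it degenerates, for instance when $F$ has no two degree-$2$ vertices at a suitable distance, the fallback is to replace single ears by ear paths of length $2$, or by small attached cliques of distinct sizes hanging on $v_i,v_j$, so that the first-order term itself already distinguishes the ears.
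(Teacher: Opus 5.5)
The paper gives no proof of this statement; it is quoted from the earlier work of Dovzhenok, Filuta and Chuhai. Your proposal therefore has to stand on its own, and it has a genuine gap in each of its two separation steps.

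\emph{Clique vertices.} You correctly note that two ears on the same pair are twins, so $\mathcal P$ uses each pair at most once. But then $\mathcal P$ is a simple graph on $\{v_1,\dots,v_N\}$ with $\deg_{\mathcal P}(v_i)=a_i$. The requirement $a_1<a_2<\dots<a_N$ asks for a simple graph with pairwise distinct degrees, which is impossible by Assertion~\ref{as1}: the degrees would have to be exactly $0,1,\dots,N-1$. Your own example already fails, since $v_N$ would need $N$ partners among $N-1$ vertices. Hence at least two clique vertices share the leading coefficient, and the proposal offers no lower-order mechanism to separate them. This is not a removable boundary case. Even when only one degree is repeated, as in the antiregular graph, the two tied vertices are twins in $\mathcal P$. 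Swapping them together with their ears is then an automorphism of $G_N$, so they tie exactly.

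\emph{Ears.} The secondary coefficient you hope to make increasing in $a_i+a_j$ vanishes identically for infinitely many admissible $F$. Let $F$ consist of two vertices $h_1,h_2$, adjacent or not, plus $m$ further vertices each joined exactly to $h_1$ and $h_2$. That is, $F=K_{2,m}$ with $m\ge 2$, or $F=K_2\vee\overline{K_m}$ with $m\ge 1$; this family includes $K_3$, $C_4$ and $K_4-e$.
\begin{itemize}
\item Up to an automorphism of $F$, every degree-$2$ vertex is one of the $m$ outer vertices.
\item So a copy through an ear $x$ on $\{v_i,v_j\}$ has $\{h_1,h_2\}\mapsto\{v_i,v_j\}$. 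Its other $m-1$ outer vertices lie among the common neighbours of $v_i,v_j$ other than $x$, which are exactly the remaining $N-2$ clique vertices.
\item Hence every ear has $F$-degree exactly $C_{N-2}^{m-1}$, whatever $\mathcal P$ is.
\item So at most one ear can be present, and then $N-2$ clique vertices are closed twins.
\end{itemize}

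For general $F$, ears can differ only through copies containing a second ear, and the bookkeeping does not go your way:
\begin{itemize}
\item A second ear sharing an attachment vertex contributes $\Theta(N^{n-5})$, not $N^{n-4}$. It contributes at all only if $F$ has two non-adjacent degree-$2$ vertices with exactly one common neighbour.
\item Ears on disjoint pairs enter through the count $|\mathcal P|-a_i-a_j+1$, i.e.\ with the opposite sign.
\end{itemize}
So neither positivity nor monotonicity in $a_i+a_j$ is available.

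Your fallbacks do not repair this. The non-attachment vertices of a small clique hung on $v_i,v_j$ are closed twins. Interior vertices of longer ear paths lie in no triangle, so for $F=K_3$ they all have $F$-degree $0$.

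What is missing is a base graph whose $F$-degrees are already strictly ordered up to one controlled symmetry, as with $A_{2l-1}$ and Lemmas~\ref{lem1}--\ref{lem2} in this paper. A vertex-transitive clique decorated with twin-prone gadgets does not provide this.
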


This result, together with the question of the existence of $F$-irregular graphs, raises the issue of their quantity.

\begin{problem} \label{p2}
	For every connected graph $F \ne K_2$, determine whether the set of all $F$-irregular graphs finite or infinite?
\end{problem}

In  \cite{r5}, the following assumption was put forward, which strengthens the statement of Conjecture \ref{con1}.

\begin{conjecture} \label{con2} \emph{(Strong conjecture about  $F$-irregular graphs)}
	For any connected graph $F$ of order $|F|\geqslant 3$, there are infinitely many $F$-irregular graphs.
\end{conjecture}

Conjecture \ref{con2} holds for the path $P_3$ on three vertices. This fact is a direct consequence of the following theorem, proved by Salehi in  \cite{r6}.

\begin{theorem} \label{th3} 
	For every integer $n \geqslant 6$, there exists at least one graph of order $n$ with distinct $P_3$-degrees of vertices.
\end{theorem}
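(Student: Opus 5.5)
The plan is to give an explicit construction for each $n \geqslant 6$. Collisions will be controlled through a closed formula for the $P_3$-degree. A copy of $P_3$ through a vertex $v$ either has $v$ as its centre or as an endpoint, so
\[
f(v)=\binom{d(v)}{2}+\sum_{u\in N(v)}\bigl(d(u)-1\bigr),
\]
where $f(v)$ denotes the $P_3$-degree. This shows at once that two \emph{twins} (vertices $u,v$ with $N(u)\setminus\{v\}=N(v)\setminus\{u\}$) always have equal $f$. The construction must therefore avoid twins, so naive threshold-graph constructions will not work. The formula also shows what happens when a new vertex $w$ is joined to a set $S$ of size $k$ in a graph on $n$ vertices:
\begin{itemize}
\item for $v\in S$, $f$ increases by $2d(v)+|N(v)\cap S|+(k-1)$;
\item for $v\notin S$, $f$ increases by $|N(v)\cap S|$;
\item $f(w)=\binom{k}{2}+\sum_{u\in S}d(u)$.
\end{itemize}

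The main step is an inductive step $n\to n+1$ that keeps an invariant strong enough to predict how these increments reorder the values of $f$. The first thing I would try is the following invariant. Order the vertices as $v_1,\dots,v_n$ with $f(v_1)<\dots<f(v_n)$, and require
\begin{itemize}
\item $d(v_1)\leqslant\dots\leqslant d(v_n)$ (degrees are monotone in $f$), and
\item the neighbourhoods are nested up to a controlled defect, in the spirit of a "half graph" with one edge perturbed to destroy twins.
\end{itemize}
The new vertex $w$ would be joined to a terminal segment $S=\{v_{j},\dots,v_n\}$, with $j$ chosen so that $d(w)=n-j+1$ is a value not yet taken in the degree sequence. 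Under the monotonicity invariant, the increments above are nondecreasing along the order on $S$ and on its complement. Hence the strict order of $f$ is preserved within $S$ and within its complement. The remaining checks are:
\begin{itemize}
\item no vertex outside $S$ overtakes one in $S$, and ties across the cut at $v_{j-1},v_j$ do not occur;
\item $f(w)$ falls strictly between two consecutive existing values, or above all of them.
\end{itemize}
Each of these is an inequality among a few explicit quantities. The invariant must then be re-verified for $n+1$.

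The main obstacle I expect is exactly this cross-cut comparison together with placing $f(w)$. Since $f$ is quadratic in the degrees, a single choice of $j$ will probably not work for all $n$. My fallback is to alternate two kinds of steps, say adding $w$ to a large terminal segment and then to a small one, so that the graph stays "balanced" and the gaps between consecutive values of $f$ stay large enough to absorb the perturbations. This also gives periodic families whose $f$-values can be written as explicit polynomials in $n$, and I would then verify distinctness by comparing these polynomials directly.

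Finally, the base cases $n=6$ and $n=7$, and possibly a few more, would be given by explicit small graphs whose $P_3$-degrees are computed by hand from the formula. One such graph is a path on six vertices with a chord added to break all symmetries. The lower bound $n\geqslant 6$ is consistent with there being no such graph for $n\leqslant 5$, which could be confirmed by a short case check but is not needed for the statement.
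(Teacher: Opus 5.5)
\textbf{Verdict: there is a genuine gap.} Your proposal is a search strategy, not a proof, although your first invariant is enough to close it (see below).

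For context, the paper gives no proof of this statement; it quotes it from Salehi. Its own construction does not recover it either: Theorem~\ref{th4} with $F=P_3$ ($n=3$, $t=1$) needs $l>14$ and gives only even orders $2l\ge 30$. So your argument must stand on its own.

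\textbf{What is missing.} The second clause of your invariant (``nested up to a controlled defect'') is never defined, and $j$ is never chosen. You correctly identify the two checks that matter: the comparison across the cut, and the placement of $f(w)$. These are exactly the content of the theorem, and both are left open, as is the alternating fallback.

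\textbf{Two technical slips.} First, for $v\in S$ the increment is $d(v)+|N(v)\cap S|+(k-1)$, since $\binom{d+1}{2}-\binom{d}{2}=d$; it is not $2d(v)+\cdots$. Second, monotone degrees do not make $|N(v)\cap S|$ monotone along the order. So your claim that the increments are nondecreasing within $S$ and within its complement depends on the undefined nesting clause.

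\textbf{The base case.} Only $n=6$ is actually settled, and you need to name the chord. On the path $1,2,\dots,6$, only the chord $(2,4)$ (or its mirror $(3,5)$) works, giving $P_3$-degrees $(2,6,5,7,3,1)$. The chords $(1,3),(1,4),(1,5),(1,6),(2,5)$ and their mirrors all create ties.

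\textbf{How to close it with your own tools.} Use only the two extreme terminal segments, $S=\emptyset$ and $S=V$, where there is no cut. Keep only your first invariant: $f(u)<f(v)\Rightarrow d(u)\le d(v)$. This holds in $G_6$ (the path above plus the chord $(2,4)$): listed by increasing $f$, its degrees are $1,1,2,2,3,3$.

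\emph{Isolated vertex.} Add an isolated vertex to a connected graph of order at least $3$. No old value changes, and the new vertex has $f=0$ and $d=0$, strictly below all the others.

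\emph{Dominating vertex.} Add a dominating vertex $w$ to a graph of order $m$ that has an edge but no dominating vertex.
\begin{itemize}
\item Each old value grows by $2d(v)+m-1$. This depends only on $d(v)$, so the invariant makes the order strictly preserved.
\item The new value $f(w)=\binom{m}{2}+2|E|$ exceeds every $f'(v)=\binom{d(v)+1}{2}+\sum_{u\in N(v)}d(u)+m-1$.
\item To see this, note that $d(v)\le m-2$ gives $\binom{d(v)+1}{2}+m-1\le\binom{m}{2}$. Also $\sum_{u\in N(v)}d(u)<2|E|$, because the sum omits $d(v)$, or is $0$ when $d(v)=0$.
\end{itemize}

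Both steps preserve the invariant. After an isolated vertex is added there is no dominating vertex, and after a dominating vertex is added the graph is connected. Alternating the two steps from $G_6$ therefore gives a $P_3$-irregular graph of every order $n\ge 6$. Your worry about threshold constructions does not apply here: the threshold operations act on a twin-free seed, and distinctness is checked directly.
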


Currently, biconnected graphs with minimum degree $2$, along with the path $P_3$, exhaust the set of known graphs for which Conjecture \ref{con2} holds. Therefore, finding new infinite series of graphs confirming Conjecture \ref{con2}, as well as developing methods for comparing $F$-degrees of vertices, remain a significant area of interest within the field of $F$-irregular graphs.

The purpose of this study is to confirm the strong conjecture about $F$-irregular graphs for any graph $F$ of diameter $2$.

In section \ref{sec2} of our  work, for an arbitrary graph $F$ of diameter $2$, we construct a family of graphs $\{F_{2l}\}$ on $2l$ vertices that will be $F$-irregular under certain restrictions on the parameter $l$.  We introduce  new methods for comparing the $F$-degrees of vertices in such graphs.
In particular, when the minimum degree of $F$ is $1$, the features of the constructions allow us to compare the $F$-degrees of some vertices $v_1$ and $v_2$ in $F_{2l}$ by establishing an injective, but not surjective, mapping between the sets of subgraphs of $F_{2l}$  isomorphic to $F$ and containing the vertices $v_1$ and $v_2$, respectively. 
\vspace{3pt}

The main result of this paper is Theorem \ref{th4}, whose proof is presented in section \ref{sec3}.

\begin{theorem} \label{th4} 
	For each graph $F$ of diameter $2$, there exist infinitely many $F$-irregular graphs of diameter $3$.
\end{theorem}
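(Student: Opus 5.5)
We build, for the given graph $F$ of diameter $2$ with $n=|F|\geqslant 3$ vertices, a family of graphs $F_{2l}$ on $2l$ vertices. Each is designed so that every copy of $F$ sits inside a structured part of the host graph, and the $F$-degrees can be compared by monotonicity arguments rather than computed exactly. The base of the construction is a graph with an increasing neighbourhood chain, close to a threshold (nested-neighbourhood) graph. Take vertices $u_1,\dots,u_l$ forming a clique $U$ and vertices $w_1,\dots,w_l$ forming an independent set $W$. Join $w_i$ to $u_j$ exactly when $j\geqslant i$ (or use a similar staircase rule), and then add a pendant-like path to push the diameter to $3$. In such a graph the neighbourhoods are nested: $N(w_1)\supseteq N(w_2)\supseteq\dots$ and, among the $u_j$, $N[u_1]\subseteq N[u_2]\subseteq\dots$. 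The heuristic is that a vertex with a strictly larger closed neighbourhood lies in at least as many copies of $F$. So the first step is to prove an \emph{injection lemma}: if $N(v_1)\setminus\{v_2\}\subseteq N(v_2)\setminus\{v_1\}$, then swapping $v_1$ and $v_2$ maps copies of $F$ through $v_1$ but not $v_2$ injectively to copies through $v_2$ but not $v_1$. Hence $\deg_F(v_1)\leqslant\deg_F(v_2)$.

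The second step is strictness. We need a copy of $F$ containing $v_2$ whose image under the swap is not a subgraph, i.e.\ a copy that uses an edge $v_2x$ with $x\notin N(v_1)$ and avoids $v_1$. Diameter $2$ is used here. Every copy of $F$ has diameter at most $2$ in the host, so copies cannot stretch along long paths, and this confines them to the dense part. At the same time, $F$ has a vertex of degree at least $\lceil (n-1)/\Delta\rceil$ of eccentricity $\le 2$, which lets us embed $F$ with a centre vertex at $v_2$ using the private neighbour $x$. The case where $\delta(F)=1$ needs separate care: a leaf of $F$ may be mapped onto $v_1$ or $v_2$ themselves. As the introduction indicates, the map is then injective but not surjective, and non-surjectivity comes from an explicit copy whose leaf sits on the private neighbour. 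For $\delta(F)\geqslant 2$ we instead use copies avoiding low-degree vertices entirely.

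The third step handles pairs not in a nesting relation, namely $w_i$ versus $u_j$ and the few extra vertices added for diameter $3$. Here we need counting estimates. The vertices of $W$ have degree at most $l$ and $W$ is independent, so we bound $\deg_F(w_i)$ above by roughly $\binom{l}{n-1}\cdot c(F)$. For the low-index $u_j$ we bound $\deg_F(u_j)$ below by roughly $\binom{l-1}{n-1}\cdot c'(F)$ using cliques. This forces a restriction on $l$ (large enough, possibly of a given parity or residue) so that the two ranges interleave without collision. If the ranges overlap, we shift them apart by modifying the staircase, for example by letting $w_i$ see $u_j$ for $j\geqslant 2i$. With that, the total order of $F$-degrees is strict along one chain, which gives irregularity for infinitely many $l$, and we check directly that the diameter is $3$.

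The main obstacle is the strictness in the second step for arbitrary $F$, especially $F$ with leaves or with many automorphisms. There we must exhibit a single copy through $v_2$ whose swap fails, and do so uniformly in $F$. I would first try placing a vertex of $F$ of maximum degree at $v_2$ and its non-neighbourhood inside $N(v_2)\cap N(v_1)$, adjusting the staircase so that such room always exists.
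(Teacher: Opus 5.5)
\textbf{There is a genuine gap.} Your swap lemma is correct, but the graph it is meant to serve cannot be $F$-irregular, and the failure is structural rather than a matter of finding one more copy of $F$.

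\emph{Your base graph has twins.} In your staircase graph ($w_i\sim u_j$ iff $j\geqslant i$) one has $N[w_1]=U\cup\{w_1\}=N[u_1]$. So swapping $w_1$ and $u_1$ is an automorphism, and they have equal $F$-degree for every $F$. Your fallback rule $j\geqslant 2i$ makes every $w_i$ with $2i>l$ isolated, giving many vertices of $F$-degree $0$; and $w_1,u_1$ are then still (false) twins. This cannot be avoided: every threshold (nested-neighbourhood) graph contains a twin pair, namely the first two vertices of its isolated/dominating construction sequence, since every later vertex sees both or neither. Nesting plus the swap lemma gives monotonicity but never strictness everywhere. The only tie-breaker in your plan is the ``pendant-like path'', which you never specify or analyse.

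\emph{Low-degree vertices.} A host vertex of degree $<\delta(F)$ lies in no copy of $F$, so at most one such vertex is allowed. Your staircase already has $\delta(F)-1$ of them ($w_{l-\delta(F)+2},\dots,w_l$), and for $\delta(F)\geqslant 2$ the end of a pendant path adds one more.

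\emph{Step 3 targets the wrong pairs.} $w_i$ and $u_j$ are in fact nested: $N(w_i)\setminus\{u_j\}\subseteq U\setminus\{u_j\}\subseteq N(u_j)$. Moreover, bounds like $c(F)C_{l}^{n-1}$ versus $c'(F)C_{l-1}^{n-1}$ have the same order in $l$, so they cannot separate the ranges.

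\textbf{What is missing} is a mechanism that breaks every tie at once and in which the hypothesis that $F$ has diameter $2$ does real work. In your sketch it does none: your base graph has the dominating vertex $u_l$, so ``copies cannot stretch'' confines nothing.

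The paper instead starts from $A_{2l-1}$, where $i\sim j$ iff $|i-j|\leqslant l-1$.
\begin{itemize}
\item Ties occur exactly in the automorphic pairs $\{i,2l-i\}$, and $z_1<\dots<z_l$ with gaps at least $C_{l-2}^{n-2}$. The gap bound comes from deleting the edge $(i+1,l+i)$, which makes $i$ and $i+1$ closed twins.
\item It then adds one vertex $2l$ adjacent to $1,\dots,t$, where $t=\delta(F)$. Every copy of $F$ through $2l$ must use all $t$ of its edges. Because $F$ has diameter $2$, such a copy avoids $l+t,\dots,2l-1$, which are at distance $3$ from $2l$.
\item In each pair $\{i,2l-i\}$ with $i\leqslant l-1$, the vertex $i$ is lifted by a positive amount smaller than $z_{i+1}-z_i$. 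The bound $\delta_l\leqslant n!C_{l-2}^{n-t-2}$ is always of lower order than $C_{l-2}^{n-2}$. The bound $f_{2l}\leqslant n!C_{l-1}^{n-t-1}$ is of lower order only when $t\geqslant 2$; for $t=1$ an injective, non-surjective map $L\to M$ gives $f_{2l}<z_2-z_1$.
\item Its partner $2l-i$ is lifted strictly less: by $0$ for the black vertices, and for the green ones via a second edge-deletion twin argument.
\item Finally $f_{2l}<z_1$.
\end{itemize}
You would need an analogous ``automorphic pairs plus one controlled perturbation'' structure; monotonicity along a nested chain cannot replace it.
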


In the final part of the study, we define a new class of graphs,  $n$-\emph{hyper-irregular graphs}, and formulate a more general assumption than  Conjecture \ref{con2} -- the super-strong conjecture about $F$-irregular graphs.
\vspace{3pt}

Notations utilized in the work are as follows:  
\vspace{3pt}

$\bullet$ 
$V(G)$ and $E(G)$ are the set of vertices and the set of edges of graph $G$, respectively;

\vspace{3pt}
$\bullet$
$(u, v)$ is the graph edge incident to vertices $u, v$, with  $u \ne v$;

\vspace{3pt}
$\bullet$
$deg_G (v)$ and $Fdeg_G (v)$ are the degree and $F$-degree of vertex $v$ in graph $G$, respectively;

\vspace{3pt}
$\bullet$
$\delta (G)$ is the minimum of all vertex degrees in graph $G$;

\vspace{3pt}
$\bullet$
$|M|$ is the number of all elements in set $M$;

\vspace{3pt}
$\bullet$
$|G|=|V(G)|$ is the order of graph $G$;

\vspace{3pt}
$\bullet$
$N_G(v)$ is the neighborhood of vertex $v$ in graph $G$, i.e., the set of all vertices of $G$ adjacent to $v$;

\vspace{3pt}
$\bullet$
$N_G[v]=N_G(v)\cup\{v\}$ is the closed neighborhood of vertex $v$ in graph $G$;

\vspace{4pt}
$\bullet$
$C_n ^k$ is the number of $k$-element subsets of an $n$-element set, that is, for integers 
\vspace{3pt}

\hspace{-21pt} $n \ge k \ge 0$, $C_n ^k= \frac {n!}{k!(n-k)!}$, where $0!= 1$, $m!= 1 \cdot 2 \cdot 3 \cdot ... \cdot m$ if $m$ is a positive integer, and $C _n ^k=0$ otherwise.

\section{Basic graph constructions} \label{sec2} 

\begin{assertion} \label{as3} 
	{For every graph $F$ of diameter $2$, the inequality   
		$\delta (F) \leqslant |F|-2$ holds.}		
\end{assertion}

\begin{definition}\label{def3}  
	Consider an arbitrary graph $F$ of diameter $2$. Let $n=|F|$, $t= \delta (F)$.	
\end{definition}	

Further, throughout section \ref{sec2}, we assume that the graph $F$ of diameter $2$ is fixed, together with its characteristics $n, t$, where $t \leqslant n-2$ as per Assertion \ref{as3}.

\subsection{Graph $A_{2l-1}$} 

\begin{definition}\label{def4}  
	For each integer $l>n$, consider the graph $A_{2l-1}$ of the following type:	
		$$V(A_{2l-1} )=\{1,2,...,2l-1\},
		E(A_{2l-1})=\{(i,j)|i,j \in V(A_{2l-1}), i\ne j, |i-j| \le l-1\}.$$	
\end{definition}

For clarity, let us depict the vertices of the graph $A_{2l-1}$ in the form of two levels: vertices $1,2,...,l$ are placed on the top level in increasing order from left to right, and vertices $l+1,l+2,...,(2l-1)$ on the bottom level in such a way that any vertex $i$ on the bottom level is positioned directly under vertex $i-l+1$ on the top level (see Figure 1).

\begin{figure}[ht]
	\centering
	\includegraphics[width=12cm]{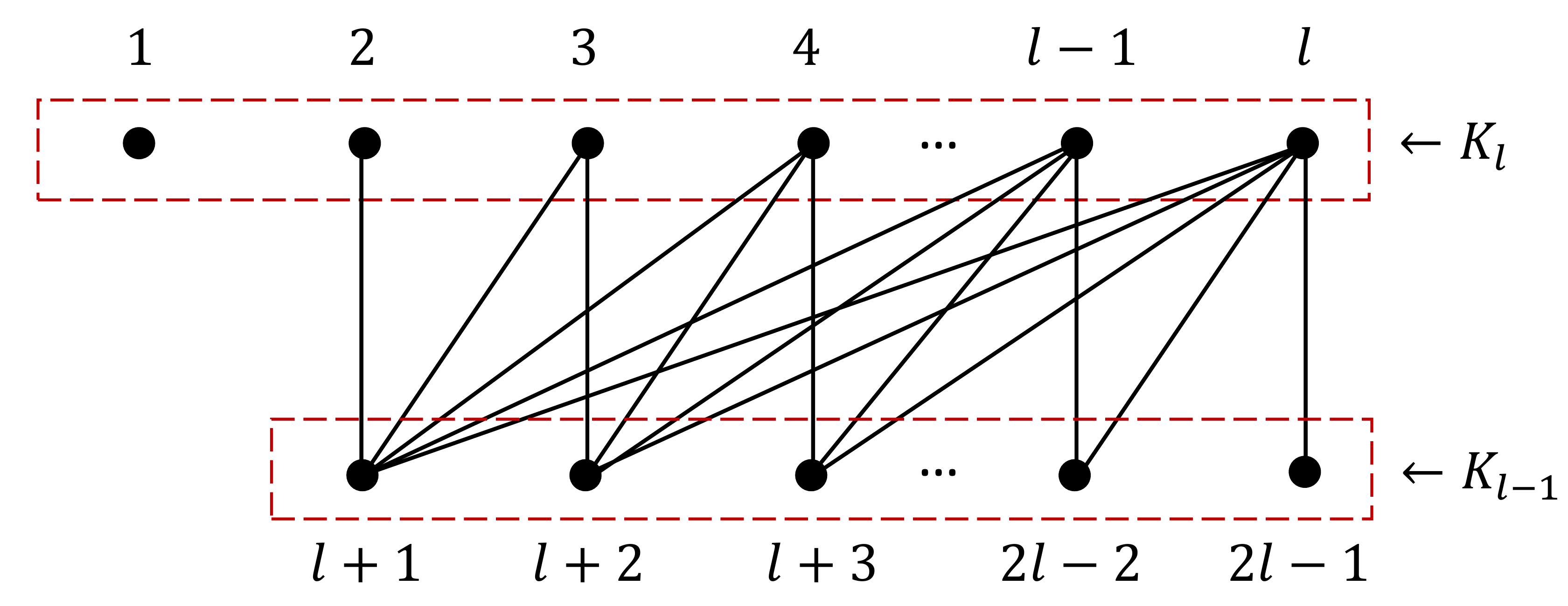}
	\caption{Graph $A_{2l-1}$.}
	\label{fig1}
\end{figure}

Then any two distinct vertices on each level of  $A_{2l-1}$ will be adjacent, and any vertex of the bottom level will be adjacent to all vertices of the top level located directly above it or to the right.

\begin{definition}\label{def5}  
	Let $i\in V(A_{2l-1} )$. Denote by $z_i$ the $F$-degree of vertex $i$ in the graph   $A_{2l-1}$.	
\end{definition}

\begin{lemma} \label{lem1} 
	For the $F$-degrees of the vertices on the top and bottom levels of $A_{2l-1}$, the following equalities hold:
	$$z_i=z_{2l-i} \quad \forall i \in V(A_{2l-1}).$$	
\end{lemma}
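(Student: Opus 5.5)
The plan is to exhibit an automorphism of $A_{2l-1}$ that sends each vertex $i$ to $2l-i$, and then use the fact that $F$-degrees are invariant under automorphisms.

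Define $\varphi\colon V(A_{2l-1})\to V(A_{2l-1})$ by $\varphi(i)=2l-i$. First, $\varphi$ maps the set $\{1,\dots,2l-1\}$ to itself: if $1\le i\le 2l-1$, then $1\le 2l-i\le 2l-1$. Second, $\varphi$ is an involution, since $\varphi(\varphi(i))=i$, so it is a bijection. Third, it preserves the edge relation, because
$$|\varphi(i)-\varphi(j)|=|(2l-i)-(2l-j)|=|i-j|.$$
Hence $(i,j)\in E(A_{2l-1})$ if and only if $(\varphi(i),\varphi(j))\in E(A_{2l-1})$, by Definition \ref{def4}. So $\varphi$ is an automorphism of $A_{2l-1}$.

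Next we use invariance of $F$-degree under automorphisms. Let $H$ be a subgraph of $A_{2l-1}$ isomorphic to $F$ and containing $i$. Its image $\varphi(H)$, with vertex set $\varphi(V(H))$ and edge set $\{(\varphi(u),\varphi(v)) : (u,v)\in E(H)\}$, is again a subgraph of $A_{2l-1}$ because $\varphi$ preserves edges. It is isomorphic to $H$, and hence to $F$, via $\varphi$, and it contains $\varphi(i)=2l-i$. The map $H\mapsto\varphi(H)$ is injective, and its inverse is given by applying $\varphi$ again. It therefore gives a bijection between the copies of $F$ containing $i$ and the copies containing $2l-i$. By Definition \ref{def1} and Definition \ref{def5}, $z_i=z_{2l-i}$.

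No step here is a real obstacle. The only points to check with care are that $\varphi$ stays within the vertex range and that the bijection on copies of $F$ respects the subgraph structure; both follow from $\varphi$ being an involutive automorphism. In the two-level picture, $\varphi$ is the rotation of Figure 1 by $180^\circ$: it exchanges the top level $\{1,\dots,l\}$ with $\{l,\dots,2l-1\}$, so it maps the top level onto the bottom level together with the shared vertex $l$.
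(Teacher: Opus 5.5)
Your proof is correct and is the paper's argument: the paper uses the same map $f(v)=2l-v$, notes that $|f(i)-f(j)|=|i-j|$ to conclude that $f$ is an automorphism of $A_{2l-1}$, and deduces $z_i=z_{2l-i}$. You also spell out why automorphisms preserve $F$-degrees, via the bijection $H\mapsto\varphi(H)$ on copies of $F$, which the paper takes for granted.
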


\begin{proof} 
Consider the mapping
$f:V(A_{2l-1}) \rightarrow V(A_{2l-1})$ such that
$$f(v)=2l-v \quad \forall v \in V(A_{2l-1}).$$
The mapping $f$ is clearly bijective. Moreover, the following equalities are true:
$$|f(i)-f(j)|=|i-j| \quad \forall i,j \in V(A_{2l-1}).$$
Consequently, for any $i,j \in V(A_{2l-1}), i\ne j$, the inequality $|f(i)-f(j)| \leqslant l-1$ is equivalent to the inequality $|i-j| \leqslant l-1$, and so vertices  $f(i)$ and $f(j)$ are adjacent in the graph $A_{2l-1}$ if and only if vertices $i$ and $j$ are adjacent in  $A_{2l-1}$. 
Thus, the mapping $f$ preserves the adjacency property of vertices in $A_{2l-1}$ and, given the bijectivity of $f$, is an automorphism of the graph $A_{2l-1}$.  
Hence, for each $i \in V(A_{2l-1})$, the vertices 
$i$ and $f(i)=2l-i$ have the same $F$-degrees in $A_{2l-1}$, that is, $z_i=z_{2l-i}$.  
\end{proof}

\begin{lemma} \label{lem2} 
	The $F$-degrees of the top-level vertices in the graph $A_{2l-1}$ satisfy the following inequalities:
	$$z_{i+1} \geqslant z_i+C_{l-2}^{n-2} \quad \forall i \in \{1,2,...,l-1\}.$$		
\end{lemma}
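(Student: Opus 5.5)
The plan is to build an injection from the copies of $F$ that contain vertex $i$ into the copies that contain vertex $i+1$. I will then exhibit at least $C_{l-2}^{n-2}$ copies containing $i+1$ that are missed by this injection.

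\textbf{Neighbourhoods.} Let $1\le i\le l-1$, so $i$ and $i+1$ are top-level vertices. By Definition~\ref{def4}, a vertex $j$ is adjacent to $i$ iff $1\le j\le i+l-1$ and $j\neq i$. Likewise $j$ is adjacent to $i+1$ iff $1\le j\le i+l$ and $j\ne i+1$. Hence $N_{A_{2l-1}}[i]=\{1,\dots,i+l-1\}\subset N_{A_{2l-1}}[i+1]=\{1,\dots,i+l\}$. In particular every neighbour of $i$ other than $i+1$ is also a neighbour of $i+1$, and $i+l$ is adjacent to $i+1$ but not to $i$.

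\textbf{The injection.} Let $H$ be a subgraph of $A_{2l-1}$ isomorphic to $F$ with $i\in V(H)$. Define $\varphi(H)$ as follows.
\begin{itemize}
\item If $i+1\in V(H)$, set $\varphi(H)=H$.
\item If $i+1\notin V(H)$, let $\varphi(H)$ be obtained from $H$ by renaming $i$ to $i+1$. Each edge $(i,u)$ of $H$ becomes $(i+1,u)$. Since $u\neq i+1$ and $u\in N(i)$, we have $u\in N(i+1)$, so this is an edge of $A_{2l-1}$. Hence $\varphi(H)\cong F$ is a subgraph containing $i+1$.
\end{itemize}
The map $\varphi$ is injective. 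Images of the first kind contain $i$ and images of the second kind do not, so the two kinds cannot collide. Within the second kind, $H$ is recovered by renaming $i+1$ back to $i$.

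A key observation about the image: in an image of the second kind, the neighbours of $i+1$ inside $\varphi(H)$ are neighbours of $i$ in $A_{2l-1}$. So they never include $i+l$. Consequently, any copy $H'$ of $F$ with $i+1\in V(H')$, $i\notin V(H')$ and $(i+1,i+l)\in E(H')$ lies outside the image of $\varphi$. It follows that $z_{i+1}-z_i$ is at least the number of such copies.

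\textbf{Counting the missed copies.} Let $M=\{i+2,\dots,i+l-1\}$, which has $l-2$ elements and avoids $i$, $i+1$ and $i+l$. Any two vertices of $M\cup\{i+1,i+l\}$ differ by at most $l-1$, so this set induces a clique in $A_{2l-1}$.

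Since $l>n$, there are $C_{l-2}^{n-2}$ choices of an $(n-2)$-subset $S\subseteq M$. For each $S$, the set $S\cup\{i+1,i+l\}$ induces $K_n$. The graph $F$ has diameter $2$, so it is connected with $n\ge 3$ and has an edge $(a,b)$. Embed $F$ into this $K_n$ by any bijection sending $a\mapsto i+1$ and $b\mapsto i+l$. This yields a copy of $F$ that contains $i+1$ and the edge $(i+1,i+l)$, and does not contain $i$. Different $S$ give different vertex sets, hence distinct copies.

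Therefore $z_{i+1}\ge z_i+C_{l-2}^{n-2}$.

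The only delicate point is identifying a property that certifies non-membership in the image of $\varphi$. Using the edge $(i+1,i+l)$ handles this cleanly, because every image copy in which $i$ was renamed to $i+1$ inherits its neighbours of $i+1$ from $N(i)$, which excludes $i+l$.
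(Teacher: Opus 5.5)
Your proof is correct and takes essentially the paper's route. The paper deletes the edge $(i+1,l+i)$ so that $i$ and $i+1$ become closed twins, and uses the swapping automorphism to obtain $z_{i+1}-z_i=|M|$ exactly, where $M$ is precisely your set of missed copies; it then counts $M$ via the same $(n-2)$-subsets of $\{i+2,\dots,l+i-1\}$ inside the clique $\{i+1,\dots,l+i\}$. Your renaming injection is an explicit form of that twin argument, and it is in fact a bijection onto the complement of $M$, so it also gives the equality that the proof of Lemma~\ref{lem9} later reuses.
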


\begin{proof} 
	Fix $i\in \{1,2,...,l-1\}$.
	
	Let us construct a lower bound for the difference $z_{i+1}-z_i$ of the $F$-degrees of vertices $i+1$ and $i$ in the graph $A_{2l-1}$.
	To do this, we consider the graph $H=A_{2l-1}\backslash (i+1,l+i)$ (see Figure 2).
	
	\begin{figure}[ht]
		\centering
		\includegraphics[width=12cm]{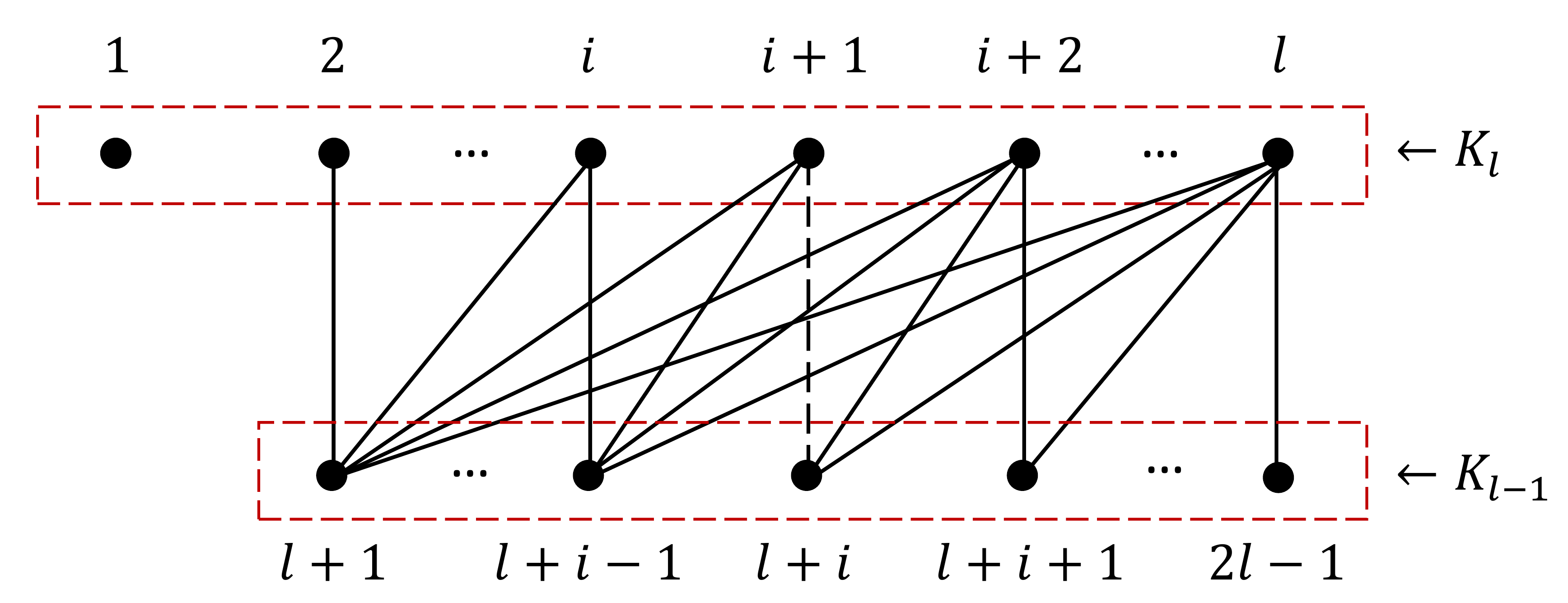}
		\caption{Graph $H=A_{2l-1}\backslash (i+1,l+i)$.}
		\label{fig2}
	\end{figure}
	
	Note that the closed neighborhoods of vertices $i$ and $i+1$ in the graph $H$ coincide:
	$$N_H[i]=N_H[i+1] =\{1,2,...,l+i-1\}.$$
	This implies that there exists an automorphism $g:V(H)\rightarrow V(H)$ such that
	$$g(i)=i+1,g(i+1)=i, g(v)=v \quad \forall v \in V(H) \backslash \{i, i+1\}.$$
	Therefore, the $F$-degrees of vertices $i$ and $i+1$ in the graph $H$ are equal, that is,
	$$Fdeg_H (i+1)=Fdeg_H (i).$$
	
Let us return to the graph $A_{2l-1}$. Then, taking into account the above, we have
$$z_{i+1}=Fdeg_H (i+1)+x=Fdeg_H (i)+x, \quad z_i=Fdeg_H (i)+y,$$
where $x$ is the number of subgraphs of $A_{2l-1}$, each of which is isomorphic to $F$ and contains the edge $(i+1,l+i)$, and $y$ is the number of subgraphs of $A_{2l-1}$  isomorphic to $F$, containing the edge $(i+1,l+i)$ and the vertex $i$. Therefore, the difference between the $F$-degrees of vertices $i+1$ and $i$ in the graph $A_{2l-1}$ is equal to  $$z_{i+1}-z_i=|M|,$$
where $M$ is the set of all subgraphs of $A_{2l-1}$ that are isomorphic to $F$, contain the edge $(i+1,l+i)$, and do not contain the vertex $i$.

We prove that $$|M|\geqslant C_{l-2}^{n-2}.$$		
For this purpose, we consider only those subgraphs of $M$ (let's call them special) whose vertices belong to the set $\{i+1,i+2,...,l+i\}$.
Note that any $2$ vertices from the set $\{i+1,i+2,...,l+i\}$ are adjacent in  $A_{2l-1}$.
Therefore, by adding to the edge $(i+1,l+i)$ any $n-2$ distinct vertices from the $(l-2)$-element set $\{i+2,i +3,...,l+i-1\}$  and the required set of edges, it is always possible to form at least one special subgraph. Moreover, the special subgraphs formed based on different choices of $(n-2)$ vertices are clearly distinct.  Consequently, the graph $A_{2l-1}$ contains at least  $C_{l-2}^{n-2}$ special subgraphs. Hence 
$$z_{i+1} - z_i=|M| \geqslant C_{l-2}^{n-2},$$
from where we obtain the required inequality $z_{i+1} \geqslant z_i+C_{l-2}^{n-2}$. 	
\end{proof}

\begin{corollary} \label{cor1} 
	The $F$-degrees of vertices on the top level of  $A_{2l-1}$ monotonically increase  with the vertex number:
	$$z_1<z_2<z_3< \cdots <z_{l-1}<z_l.$$		
\end{corollary}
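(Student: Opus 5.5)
Corollary \ref{cor1} follows from Lemma \ref{lem2} once we check that the increment $C_{l-2}^{n-2}$ appearing there is strictly positive.

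\emph{Step 1: the increment is positive.} By Definition \ref{def4} the graph $A_{2l-1}$ is considered only for integers $l>n$, so $l-2>n-2$. Since $F$ has diameter $2$, it has at least three vertices, so $n\geqslant 3$ and $n-2\geqslant 1$. Hence the integers satisfy $l-2\geqslant n-2\geqslant 0$. By the convention fixed in the notation list, $C_{l-2}^{n-2}=\frac{(l-2)!}{(n-2)!(l-n)!}$ is then a genuine binomial coefficient, and it is at least $1$. This is the only point that needs care: the paper's convention sets $C_n^k=0$ outside the range $n\geqslant k\geqslant 0$, so we must confirm we are inside that range.

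\emph{Step 2: apply Lemma \ref{lem2}.} For each $i\in\{1,\dots,l-1\}$, Lemma \ref{lem2} and Step 1 give
$$z_{i+1}\geqslant z_i+C_{l-2}^{n-2}\geqslant z_i+1>z_i.$$

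\emph{Step 3: chain the inequalities.} Letting $i$ run over $1,2,\dots,l-1$ and combining the resulting strict inequalities yields
$$z_1<z_2<\cdots<z_{l-1}<z_l.$$
No further obstacle arises: the combinatorial content is entirely contained in Lemma \ref{lem2}.
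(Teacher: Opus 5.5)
Your proof is correct and matches the paper's approach: the paper gives no separate proof and treats the corollary as immediate from Lemma~\ref{lem2}. Your added check that $C_{l-2}^{n-2}\geqslant 1$ (from $l>n\geqslant 3$) is exactly the implicit detail that makes each increment strictly positive.
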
	

\subsection{Graph  $F_{2l}$} 

\begin{definition}\label{def6}  
	For each integer $l>n$, define the graph $F_{2l}$ as follows
	 (graph $F_{2l}$ is shown in Figure 3): 
	
	\begin{center}
		$V(F_{2l})=V(A_{2l-1})\cup\{2l\}$,
		$E(F_{2l})= E(A_{2l-1})\cup \{(1,2l),(2,2l),..., (t,2l)\}$.
	\end{center}  
\end{definition}

\begin{figure}[ht]
	\centering
	\includegraphics[width=12cm]{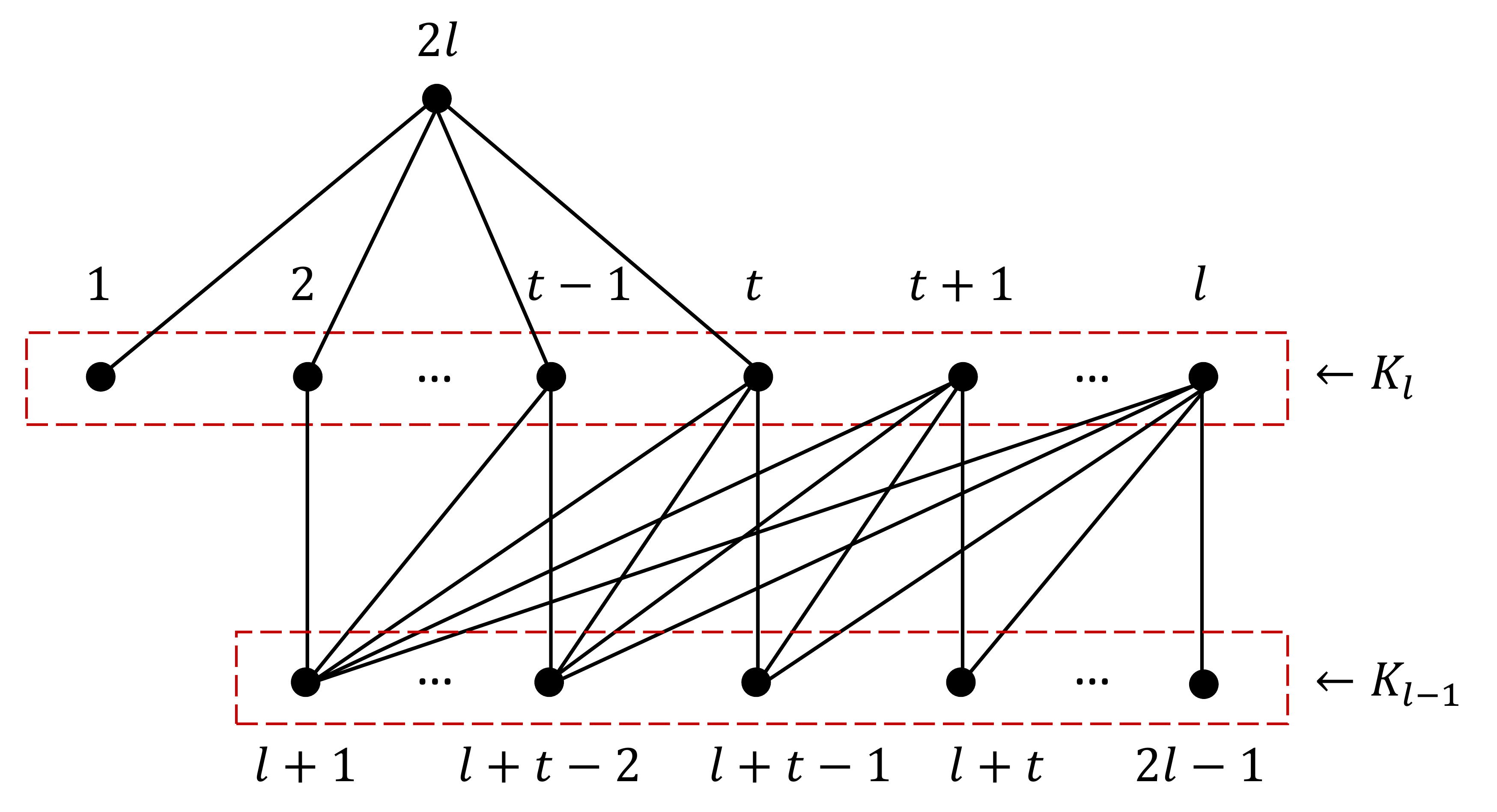}
	\caption{Graph $F_{2l}$.}
	\label{fig3}
\end{figure}

\begin{lemma} \label{lem3} 
	The diameter of the graph $F_{2l}$ is equal to $3$.
\end{lemma}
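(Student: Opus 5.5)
We must show that the diameter of $F_{2l}$ equals $3$. This needs two things: every pair of vertices is joined by a path of length at most $3$, and some pair is at distance exactly $3$. We use $l>n$ and $1\leqslant t\leqslant n-2$. Here $t\geqslant 1$ because $F$ has diameter $2$ and is therefore connected. Consequently the new vertex $2l$ is adjacent exactly to the top-level vertices $1,\dots,t$, and $t\leqslant l-2$.

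\emph{Lower bound.} We show $d(2l,2l-1)=3$.
\begin{itemize}
\item The vertices $2l$ and $2l-1$ are not adjacent, since $2l-1>t$.
\item A common neighbour would be some $j\leqslant t$ with $|2l-1-j|\leqslant l-1$, i.e. $j\geqslant l$. This is impossible because $t<l$. Hence $d(2l,2l-1)\geqslant 3$.
\item The path $2l,\,1,\,l,\,2l-1$ has length $3$: we have $|l-1|=l-1$ and $|2l-1-l|=l-1$, so both inner steps are edges of $A_{2l-1}$.
\end{itemize}
So the diameter is at least $3$.

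\emph{Upper bound.} Take two vertices $i<j$ of $A_{2l-1}$.
\begin{itemize}
\item If $j-i\leqslant l-1$, they are adjacent.
\item Otherwise $j-i\geqslant l$, and the vertex $l$ is a common neighbour. Indeed, $i\leqslant l-1$ gives $|l-i|\leqslant l-1$, and $j\geqslant l+1$ gives $|j-l|\leqslant l-1$.
\end{itemize}
So $A_{2l-1}$ has diameter at most $2$. Distances between old vertices can only decrease when $2l$ is added, so they stay at most $2$ in $F_{2l}$.

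For the new vertex $2l$ and any $v\in V(A_{2l-1})$, use a path through vertex $1$. Vertex $1$ is adjacent to every $v\leqslant l$, so $d(2l,v)\leqslant 2$ for such $v$. For $v\geqslant l+1$, the path $2l,1,l,v$ has length $3$. Hence every distance is at most $3$.

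Combining the two bounds gives diameter exactly $3$. There is no real obstacle here; the only step needing care is checking that vertex $2l-1$ has no neighbour among $\{1,\dots,t\}$, which is where $t\leqslant n-2<l$ is used.
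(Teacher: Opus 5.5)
Your proof is correct and follows essentially the same route as the paper: the upper bound uses vertex $l$ as a common neighbour of the old vertices and reaches $2l$ through vertex $1$, and the lower bound uses a vertex of $\{l+t,\dots,2l-1\}$ (you take $2l-1$), which is neither adjacent to $2l$ nor to any of its neighbours $1,\dots,t$. You also state explicitly that $t\geqslant 1$ because $F$ is connected; the paper uses this but leaves it implicit.
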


\begin{proof} 
	First of all, we note that every vertex of  $F_{2l}$ different from $l$ and $2l$ is adjacent to the vertex $l$, while the vertex $2l$ is adjacent to the vertex $1$. Therefore, the distance between any two distinct vertices in  $F_{2l}$ does not exceed $3$. On the other hand, any vertex $v \in \{l+t,l+t+1,...,2l-1\}$ lies at distance  $3$ from the vertex $2l$, since $v$, due to the inequality $l>n\geqslant t+2$, does not belong to the neighborhood $N_{F_{2l}}(2l)=\{1,2,...,t\}$ of the vertex $2l$ in  $F_{2l}$, and is also not adjacent to any vertex from the set $N_{F_{2l}}(2l)$. Consequently, the graph $F_{2l}$ has diameter $3$.		
\end{proof}

\begin{definition}\label{def7}  
	Let $i\in V(F_{2l})$.  Denote by $f_i$ the $F$-degree of vertex $i$ in the graph $F_{2l}$. 	
\end{definition}

\begin{definition}\label{def8}  
	Denote by $L$ the set of all subgraphs of $F_{2l}$ that are isomorphic to $F$ and contain the vertex $2l$. 
\end{definition}

\begin{definition}\label{def9}  
	Let $X$ denote the graph induced by the  vertex set $\{1,2,...,l+t-1\}\cup\{2l\}$ of the graph $F_{2l}$.
\end{definition}

\begin{lemma} \label{lem4} 
	For any subgraph $Y\in L$, the following holds:
	
	$1)$ $(1,2l),(2,2l), ..., (t,2l) \in E(Y)$;
	
	$2)$ $Y \subseteq X$.
\end{lemma}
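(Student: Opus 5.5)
Let $Y\in L$, so $Y$ is a subgraph of $F_{2l}$ isomorphic to $F$ with $2l\in V(Y)$. The plan is to prove part $1)$ from degree considerations and then obtain part $2)$ from a distance argument that uses $\operatorname{diam}(F)=2$.

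For $1)$: in $F_{2l}$ the vertex $2l$ has neighbourhood $N_{F_{2l}}(2l)=\{1,2,\dots,t\}$, so $deg_Y(2l)\leqslant t$. Since $Y\cong F$, every vertex of $Y$ has degree at least $\delta(F)=t$ in $Y$. Hence $deg_Y(2l)=t$, and $Y$ contains all $t$ edges $(1,2l),\dots,(t,2l)$.

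For $2)$: we must show $V(Y)\subseteq\{1,\dots,l+t-1\}\cup\{2l\}$. Since $X$ is an induced subgraph of $F_{2l}$, every edge of $Y$ between vertices of $X$ then lies in $X$, giving $Y\subseteq X$. Since $Y\cong F$ has diameter $2$, $dist_Y(v,2l)\leqslant 2$ for every $v\in V(Y)$. Because $Y$ is a subgraph of $F_{2l}$, this gives $dist_{F_{2l}}(v,2l)\leqslant 2$, so $v$ equals $2l$, lies in $\{1,\dots,t\}$, or is adjacent to some vertex $j\in\{1,\dots,t\}$.

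It remains to identify these vertices. The vertex $2l$ is adjacent only to $1,\dots,t$. A vertex $v\in\{1,\dots,2l-1\}$ is adjacent to some $j\leqslant t$ exactly when $|v-j|\leqslant l-1$ for some $j\leqslant t$. This holds iff $v\leqslant t+l-1$, since $v=1$ or $v=2$ can use $j=2$ or $j=1$ respectively. So every vertex at distance at most $2$ from $2l$ lies in $\{1,\dots,l+t-1\}\cup\{2l\}$, which is exactly $V(X)$. This is the same computation as in the proof of Lemma~\ref{lem3}.

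There is no serious obstacle. The one point needing care is to use the bound on distances inside $Y$ rather than inside $F_{2l}$: distances in a subgraph are at least those in $F_{2l}$, so the diameter-$2$ bound transfers in the correct direction.
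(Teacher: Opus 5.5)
Your proof is correct and follows the paper's argument essentially verbatim. Part $1)$ compares $\delta(Y)=t$ with $N_{F_{2l}}(2l)=\{1,\dots,t\}$, and part $2)$ uses $\mathrm{dist}_{F_{2l}}(v,2l)\leqslant \mathrm{dist}_Y(v,2l)\leqslant 2$ to exclude $\{l+t,\dots,2l-1\}$; you also make explicit that $X$ is induced, which the paper leaves implicit. One harmless slip: ``$v=1$ can use $j=2$'' needs $t\geqslant 2$, but only the implication ``$v$ adjacent to some $j\leqslant t$ $\Rightarrow$ $v\leqslant l+t-1$'' is used, and $v=1\in\{1,\dots,t\}$ is covered anyway.
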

\begin{proof} 
	Let $Y\in L$.
	
	We prove $1)$.
	Obviously, $\delta (Y)=t$, since the subgraph $Y$ is isomorphic to the graph $F$, and $\delta (F)=t$. Consequently, the vertex $2l \in V(Y)$ is adjacent to at least \(t\) distinct vertices in $Y$. Further, from the equalities $deg_{F_{2l}}(2l)=t$, $N_{F_{2l}}(2l)=\{1,2,...,t\}$, and the fact that $Y$ is a subgraph of $F_{2l}$, we obtain the desired result.
 	
	We prove $2)$. Note that the distance from the vertex $2l\in V(Y)$ to any other vertex of \(Y\) does not exceed $2$, since $Y$ is isomorphic to the graph $F$ with diameter $2$. On the other hand, in the graph $F_{2l}$, any vertex from the set $\{l+t,l+t+1,...,2l-1\}$ is located at distance $3$ from the vertex $2l$ according to the proof of Lemma \ref{lem3}. Therefore, no vertex from this set belongs to  $Y$, which is a subgraph of $F_{2l}$. Hence, $Y \subseteq X$.
\end{proof}

\begin{definition}\label{def10}  
	Let $i \in V(A_{2l-1})$. The quantity $\delta_i=f_i-z_i$ will be called the jump of vertex $i$ when  transitioning  from the graph $A_{2l-1}$ to the graph $F_{2l}$.	
\end{definition} 

\begin{assertion} \label{as4} 
	{The jump of vertex $i$, where $i\in\{1,2,...,2l-1\}$, is equal to the number of subgraphs from the set $L$ that contain  vertex  $i$.} 		
\end{assertion}

\begin{lemma} \label{lem5} 
	The following equalities hold for the jumps of vertices in the graph $A_{2l-1}$:
	
	$ 1) \: \delta_i=f_{2l} \quad \forall i \in \{1,2,...,t\} $,
	
	$ 2) \: \delta_i = \delta_l \quad \forall i \in \{t+1,t+2,...,l\} $,
	
	$ 3) \: \delta_i=0 \quad \forall i \in \{l+t,l+t+1, ...,2l-1\}$.	
\end{lemma}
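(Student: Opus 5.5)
By Assertion \ref{as4}, $\delta_i$ is the number of subgraphs $Y\in L$ with $i\in V(Y)$, so all three items reduce to statements about which vertices lie in members of $L$. Lemma \ref{lem4} supplies the needed structure: every $Y\in L$ contains the edges $(1,2l),\dots,(t,2l)$ and satisfies $Y\subseteq X$.

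For item 1), take $i\in\{1,\dots,t\}$. By Lemma \ref{lem4}(1), every $Y\in L$ contains the edge $(i,2l)$, hence the vertex $i$. Therefore every member of $L$ is counted in $\delta_i$, and $\delta_i=|L|$. By definition, $L$ is the set of all subgraphs of $F_{2l}$ isomorphic to $F$ and containing $2l$, so $|L|=f_{2l}$, which gives 1).

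For item 3), take $i\in\{l+t,\dots,2l-1\}$. Then $i\notin V(X)=\{1,\dots,l+t-1\}\cup\{2l\}$. By Lemma \ref{lem4}(2), no $Y\in L$ contains $i$, so $\delta_i=0$.

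Item 2) is the only step requiring a construction. For $i,j\in\{t+1,\dots,l\}$ I would exhibit an automorphism $\sigma$ of $F_{2l}$ that swaps $i$ and $j$ and fixes every other vertex, including $2l$. Such a $\sigma$ maps $L$ bijectively onto itself and sends subgraphs containing $i$ to subgraphs containing $j$, so $\delta_i=\delta_j$; taking $j=l$ gives 2). It suffices to show that $N_{F_{2l}}[i]=N_{F_{2l}}[j]$ for all $i\in\{t+1,\dots,l\}$, because two vertices with equal closed neighbourhoods can be transposed by an automorphism (the argument already used for $H$ in Lemma \ref{lem2}).

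This equality is false in the full graph. In $A_{2l-1}$, vertex $i$ of the top level is adjacent to $\{1,\dots,l+i-1\}\setminus\{i\}$, and this set depends on $i$. So the transposition must be carried out inside $X$, which contains every member of $L$ and on which Lemma \ref{lem4} already restricts attention. In $X$, vertex $i\in\{t+1,\dots,l\}$ satisfies $l+i-1\ge l+t$, so it is adjacent to all of $\{1,\dots,l+t-1\}\setminus\{i\}$. It is not adjacent to $2l$, since $i>t$. Hence $N_X[i]=\{1,\dots,l+t-1\}$ for every such $i$, and by the equal-closed-neighbourhood argument the transposition of $i$ and $l$ is an automorphism of $X$ fixing $2l$.

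To finish item 2), let $L_X$ denote the set of subgraphs of $X$ isomorphic to $F$ that contain $2l$; by Lemma \ref{lem4}(2), $L=L_X$. The transposition of $i$ and $l$ maps $L_X$ to itself and carries those members containing $i$ onto those containing $l$. Hence $\delta_i=\delta_l$. The main obstacle is noticing that the swap must be performed in $X$ rather than in $F_{2l}$; once that is seen, the rest is the neighbourhood check above, which uses $i\ge t+1$.
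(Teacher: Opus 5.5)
Your proposal is correct and follows the paper's proof. Items 1) and 3) come directly from Lemma~\ref{lem4} and Assertion~\ref{as4}, and for item 2) you rightly drop the automorphism of $F_{2l}$ and use, as the paper does, the equality $N_X[i]=N_X[l]=\{1,2,\dots,l+t-1\}$, so that the transposition of $i$ and $l$ is an automorphism of $X$ fixing $2l$.
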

 
\begin{proof} 
	The truth of equalities $1)$ and $3)$ follows directly from Lemma \ref{lem4} and Assertion \ref{as4}.

	We prove $2)$. Fix $i\in \{t+1,t+2,...,l\}$. According to Lemma \ref{lem4} and Assertion \ref{as4}, the jump of vertex $i$ is equal to the number of all subgraphs of $X$ that are isomorphic to  $F$ and contain vertices $2l$ and  $i$.
	Furthermore, since the closed neighborhoods of vertices $i$ and $l$ in the graph $X$ coincide
	$$N_X[i]=N_X[l] =\{1,2,...,l+t-1\},$$
	then there exists an automorphism $f_X:V(X)\rightarrow V(X)$ such that $$f_X(i)=l,f_X(l)=i, f_X(v)=v \quad \forall v \in V(X) \backslash \{i, l\}.$$
	Consequently, vertices $i$ and $l$ belong to the same number of subgraphs of $X$ that are isomorphic to $F$ and contain vertex $2l$. Therefore, the equality $\delta_i = \delta_l$ holds for each $i \in\{t+1,t+2,...,l\}$.		
\end{proof}	

\begin{lemma} \label{lem6} 
	The $F$-degree of vertex $2l$ in the graph $F_{2l}$ satisfies the inequality
	$$ f_{2l} \leqslant n!C_{l-1}^{n-t-1}.$$	
\end{lemma}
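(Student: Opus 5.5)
We need to bound $f_{2l}=|L|$ by $n!\,C_{l-1}^{n-t-1}$. The plan is to count each $Y\in L$ through its vertex set and then bound the number of subgraphs on a fixed vertex set by $n!$.

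First, by Lemma \ref{lem4}, every $Y\in L$ contains the vertices $2l,1,2,\dots,t$, since it contains the edges $(1,2l),\dots,(t,2l)$. Also $Y\subseteq X$. So $V(Y)=\{2l,1,\dots,t\}\cup S$, where $S$ is a set of exactly $n-t-1$ further vertices of $X$. These come from $\{t+1,\dots,l+t-1\}$, which has $l-1$ elements. Hence there are at most $C_{l-1}^{n-t-1}$ possible vertex sets $V(Y)$.

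Second, we fix an $n$-element vertex set $W\subseteq V(F_{2l})$ and count the subgraphs $Y$ of $F_{2l}$ with $V(Y)=W$ and $Y\cong F$. Each such $Y$ is the image of $F$ under some bijection $\varphi:V(F)\to W$, and its edge set is $\varphi(E(F))$. So $Y$ is determined by $\varphi$, and the number of such $Y$ is at most the number of bijections, which is $n!$. The map $\varphi\mapsto Y$ need not be injective, since automorphisms of $F$ give the same $Y$, but that only helps the upper bound.

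Multiplying the two counts gives $f_{2l}=|L|\leqslant n!\,C_{l-1}^{n-t-1}$.

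The only point needing care is the count of candidate vertices. Since $l>n\geqslant t+2$, we have $t<l$, so the set $\{t+1,\dots,l+t-1\}$ is well defined and has exactly $l-1$ elements. The same inequality shows that $n-t-1\geqslant 1$ is a valid subset size. The rest is routine.
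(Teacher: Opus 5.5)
Your proposal is correct and follows the paper's proof essentially step for step. Lemma \ref{lem4} forces the vertices $1,\dots,t,2l$ and confines the remaining $n-t-1$ vertices to the $(l-1)$-element set $\{t+1,\dots,l+t-1\}$, and each resulting vertex set carries at most $n!$ copies of $F$; your bijection argument just spells out the $n!$ bound, which the paper states without justification.
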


\begin{proof} 
	We construct an upper bound for $f_{2l}=|L|$. Note that every subgraph of $L$ is isomorphic to $F$, and hence, has order $n$. Moreover, by Lemma \ref{lem4}, each subgraph of $L$ contains vertices $1,2,...,t,2l$, while the remaining $n-t-1$ vertices of such subgraphs (distinct from $1,2,...,t,2l$)  belong to the set $\{t+1,t+2,...,l+t-1\}$ of cardinality $l-1$. Therefore, there are exactly $C_{l-1}^{n-t-1}$ ways to choose the $n-t-1$ remaining vertices, and so there are at most $C_{l-1}^{n-t-1}$ different sets of $n$ vertices, each of which is the vertex set of some subgraph of $L$. Furthermore, since any graph of order $n$ contains at most $n!$ subgraphs isomorphic to the graph $F$ of order $n$, then for each of these sets  there are at most $n!$ subgraphs from $L$ with that vertex set. Thus, we obtain the estimate:
	$$f_{2l}=|L| \leqslant n!C_{l-1}^{n-t-1}.$$	
\end{proof}

\begin{lemma} \label{lem7} 
	For the jump of vertex $l$ in the graph $A_{2l-1}$, the following inequality holds:
	$$\delta_l \leqslant n!C_{l-2}^{n-t-2}.$$	
\end{lemma}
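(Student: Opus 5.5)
By Assertion \ref{as4}, $\delta_l$ equals the number of subgraphs in $L$ that contain the vertex $l$. We bound this count the same way Lemma \ref{lem6} bounds $|L|$, but now one more vertex of each counted subgraph is fixed in advance.

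First, take any $Y\in L$ with $l\in V(Y)$. By Lemma \ref{lem4}, $Y\subseteq X$ and $Y$ contains the vertices $1,2,\dots,t,2l$. Since $l>n\geqslant t+2$, the vertex $l$ is different from $1,\dots,t$ and from $2l$. So $V(Y)$ contains the $t+2$ fixed vertices $1,\dots,t,l,2l$.

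Second, $|Y|=n$, so the remaining $n-t-2$ vertices of $Y$ lie in $V(X)\setminus\{1,\dots,t,l,2l\}=\{t+1,\dots,l+t-1\}\setminus\{l\}$, a set of cardinality $l-2$. Hence there are at most $C_{l-2}^{n-t-2}$ possible vertex sets for such $Y$. If $n-t-2<0$ this binomial coefficient is $0$; but Assertion \ref{as3} gives $t\leqslant n-2$, so the exponent is nonnegative and the coefficient is a genuine count.

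Third, fix one of these vertex sets of order $n$. As in Lemma \ref{lem6}, a graph on $n$ vertices contains at most $n!$ subgraphs isomorphic to $F$ with exactly that vertex set, since each such copy is determined by a bijection from $V(F)$. Multiplying the two counts gives $\delta_l\leqslant n!\,C_{l-2}^{n-t-2}$.

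No step here is a real obstacle. The only points that need care are that $l$ is genuinely distinct from the forced vertices $1,\dots,t,2l$, which the hypothesis $l>n$ guarantees, and that the ambient set $\{t+1,\dots,l+t-1\}$ taken from Lemma \ref{lem4} really has $l-1$ elements, so that removing $l$ leaves exactly $l-2$.
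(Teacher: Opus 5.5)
Your proposal is correct and follows essentially the same route as the paper's proof: you use Assertion~\ref{as4} to identify $\delta_l$ with the number of subgraphs in $L$ that contain $l$. You then apply Lemma~\ref{lem4} to fix the $t+2$ vertices $1,\dots,t,l,2l$, choose the remaining $n-t-2$ vertices from the $(l-2)$-element set $\{t+1,\dots,l+t-1\}\setminus\{l\}$, and bound the number of copies of $F$ on each vertex set by $n!$, exactly as in Lemma~\ref{lem6}.
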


\begin{proof} 
	Let us construct an upper bound for $\delta_l$. By Assertion \ref{as4}, the jump $\delta_l$ coincides with the number of all subgraphs of $L$ containing vertex $l$. Taking into account Lemma \ref{lem4}, each such subgraph contains $t+2$ vertices $1,2,...,t,2l,l$, and the remaining $n-t-2$ vertices belong to the set $\{t+1,t+2,...,l+t-1\}\setminus \{l\}$ of cardinality $l-2$. It follows that the $n-t-2$ remaining vertices of such subgraphs can be chosen in exactly \(C_{l-2}^{n-t-2}\) ways. Next, similarly to the proof of Lemma \ref{lem6}, we establish the truth of the inequality $\delta_l \leqslant n!C_{l-2}^{n-t-2} $.	 
\end{proof}

\begin{lemma} \label{lem8}  
	If $C_{l-1}^{n-1}> n!C_{l-1}^{n-t-1}$, then the $F$-degree of  vertex $2l$ in the graph $F_{2l}$ satisfies the following estimate:  $$f_{2l}<z_1.$$	
\end{lemma}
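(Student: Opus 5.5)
The plan is to combine the upper bound of Lemma \ref{lem6} with a lower bound $z_1 \geqslant C_{l-1}^{n-1}$. Once both are in hand, the hypothesis $C_{l-1}^{n-1} > n!C_{l-1}^{n-t-1}$ gives the chain
$$f_{2l} \leqslant n!C_{l-1}^{n-t-1} < C_{l-1}^{n-1} \leqslant z_1,$$
which is exactly the claimed estimate. So the only new ingredient is the lower bound for $z_1$, the $F$-degree of vertex $1$ in $A_{2l-1}$.

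For that bound I will use the same counting device as in the proof of Lemma \ref{lem2}.

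\textbf{Step 1.} Vertex $1$ lies in the top level $\{1,2,\dots,l\}$. By Definition \ref{def4}, any two vertices of this level are adjacent, since their difference is at most $l-1$. Hence the top level induces a complete graph $K_l$ in $A_{2l-1}$.

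\textbf{Step 2.} Choose any $(n-1)$-element subset $S$ of the $(l-1)$-element set $\{2,3,\dots,l\}$. This is possible because $l>n$. The set $\{1\}\cup S$ has $n$ vertices and induces a complete subgraph $K_n$. Since $|F|=n$, this $K_n$ contains at least one subgraph isomorphic to $F$, and that subgraph contains vertex $1$: as $F$ has diameter $2$, it is connected and spans all $n$ vertices of the chosen set.

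\textbf{Step 3.} Subgraphs built from different subsets $S$ have different vertex sets, so they are distinct. Therefore
$$z_1 \geqslant C_{l-1}^{n-1}.$$

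There is no real obstacle here. The one point needing care is that the subgraph found in Step 2 actually passes through vertex $1$, which holds because every copy of $F$ in a graph on exactly $n$ vertices uses all of its vertices.
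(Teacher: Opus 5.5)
Your proposal is correct and follows the paper's own proof. The paper also obtains $z_1 \geqslant C_{l-1}^{n-1}$ by adding vertex $1$ to $(n-1)$-element subsets of the clique $\{2,\dots,l\}$, and then chains this with Lemma~\ref{lem6} and the hypothesis.
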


\begin{proof} 
	We construct a lower bound for $z_1$, the number of all subgraphs of the graph $A_{2l-1}$ that are isomorphic to $F$ and contain the vertex $1$.
	First, note that any two distinct vertices in the set $\{1, 2, \ldots, l\}$ are adjacent in $A_{2l-1}$. Therefore, from any subset of $n-1$ distinct vertices in $\{2,3,\ldots,l\}$ together with vertex $1$, by adding the necessary edges, one can always form at least one subgraph of $A_{2l-1}$ isomorphic to $F$.	
	It is easy to see that different $(n-1)$-element subsets of $\{2,3,\ldots,l\}$, together with vertex $1$, correspond to distinct subgraphs of $A_{2l-1}$ isomorphic to $F$. Therefore, $z_1$ 
	is at least the number of unordered samples of size $n-1$ chosen from the $(l-1)$-element set $\{2,3,...,l\}$, that is, 
	$$z_1 \geqslant C_{l-1}^{n-1}.$$
		
	Next, from the inequalities $f_{2l} \leqslant n!C_{l-1}^{n-t-1} $ (Lemma \ref{lem6}) and $C_{l-1}^{n-1}> n!C_{l-1}^{n-t-1}$ (the assumption of Lemma \ref{lem8}), we deduce that $f_{2l} < z_1$.		
\end{proof}	

\begin{lemma} \label{lem9} 
	If $t=1$ or $t \geqslant 2$, $C_{l-2}^{n-2}>n!C_{l-1}^{n-t-1}$, then the $F$-degree of vertex $2l$ in the graph $F_{2l}$ satisfies the inequalities
	$$0 < f_{2l} < z_{i+1}-z_i \quad \forall i \in \{1,2,...,t\}.$$ 			
\end{lemma}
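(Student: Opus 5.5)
The plan has three parts: show $f_{2l}>0$, get the upper bound on $f_{2l}$ in the case $t\geqslant 2$ from the hypothesis, and get it in the case $t=1$ by a comparison of subgraph sets.

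\emph{Positivity.} I exhibit one subgraph in $L$. Take a vertex $w$ of $F$ of minimum degree $t$, and map $w\mapsto 2l$ and $N_F(w)$ onto $\{1,\dots,t\}$. Map the remaining $n-t-1$ vertices of $F$ injectively into $\{t+1,\dots,l\}$. This is possible because $l>n$.

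Every two vertices of $\{1,\dots,l\}$ are adjacent in $F_{2l}$, and $2l$ is adjacent to $1,\dots,t$. Hence every edge of $F$ has an edge as its image. This gives a copy of $F$ containing $2l$, so $f_{2l}\geqslant 1$.

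\emph{Case $t\geqslant 2$.} Combine Lemma \ref{lem6} with the hypothesis:
$$f_{2l}\leqslant n!\,C_{l-1}^{n-t-1}<C_{l-2}^{n-2}.$$
By Lemma \ref{lem2}, $C_{l-2}^{n-2}\leqslant z_{i+1}-z_i$ for every $i\in\{1,\dots,l-1\}$. In particular this holds for $i\leqslant t$, since $t\leqslant n-2<l-1$.

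\emph{Case $t=1$.} Here no numerical hypothesis is given, so I compare sets directly. Only $i=1$ matters. By the proof of Lemma \ref{lem2}, $z_2-z_1=|M|$, where $M$ is the set of copies of $F$ in $A_{2l-1}$ that contain the edge $(2,l+1)$ and avoid vertex $1$.

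I will build an injective, non-surjective map $\varphi:L\to M$. For $Y\in L$, Lemma \ref{lem4} says that $Y$ contains the edge $(1,2l)$, has all vertices in $\{1,\dots,l\}\cup\{2l\}$, and that $2l$ is a leaf of $Y$ attached to $1$.

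I would define $\varphi$ by the relabelling $2l\mapsto l+1$, $1\mapsto 2$, and $2\mapsto 1$, leaving the other vertices fixed; then I would delete vertex $1$. The trouble is that $Y$ may use both $1$ and $2$.

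A cleaner choice is the map $\psi(v)=v+1$ on $\{1,\dots,l\}$ together with $2l\mapsto l+1$. Under $\psi$:
\begin{itemize}
\item vertices of $Y$ go into $\{2,\dots,l+1\}$;
\item all pairs inside $\{2,\dots,l\}$ are adjacent;
\item the only other edge of $Y$ at $2l$, namely $(2l,1)$, goes to $(l+1,2)$, which is an edge of $A_{2l-1}$ since $|l+1-2|=l-1$;
\item any other pair $(l+1,u)$ with $u\leqslant l$ and $u\geqslant 2$ is also adjacent, so adjacency is preserved.
\end{itemize}
Thus $\psi(Y)$ is a copy of $F$ in $A_{2l-1}$ that contains $(2,l+1)$ and avoids $1$, so $\psi(Y)\in M$.

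Injectivity is immediate because $\psi$ is injective on vertices. For non-surjectivity I take the special subgraphs from the proof of Lemma \ref{lem2}: any element of $M$ on $\{2,\dots,l+1\}$ in which $l+1$ has degree at least $2$ is not an image of $\psi$, and such elements exist because $F$ has diameter $2$ and $n\geqslant 3$. This gives $f_{2l}=|L|<|M|=z_2-z_1$.

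The main obstacle is this $t=1$ case. The step to check carefully is that every image $\psi(Y)$ really lies in $M$, which rests on Lemma \ref{lem4}'s containment $Y\subseteq X$ with $X=\{1,\dots,l\}\cup\{2l\}$ when $t=1$. The other step is to exhibit one element of $M$ outside $\psi(L)$, for example a copy of $F$ on $\{2,\dots,l+1\}$ in which vertex $l+1$ plays the role of a vertex of degree at least $2$.
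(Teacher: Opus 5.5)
Your positivity argument and the case $t\geqslant 2$ are fine and agree with the paper. The gap is in the case $t=1$: your map $\psi$ is not injective on vertices, because $\psi(l)=l+1=\psi(2l)$. For $t=1$ the vertex $l$ lies in $X$, and many $Y\in L$ contain it; an example is a copy of $F$ on $\{1,\dots,n-2,l,2l\}$, like the subgraph $L_1$ in the proof of Lemma \ref{lem10}. For such $Y$ the image $\psi(Y)$ has only $n-1$ vertices, so it is not a copy of $F$. Hence $\psi$ does not map $L$ into $M$, and your claim that ``injectivity is immediate'' is false.

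This cannot be patched while keeping $2l\mapsto l+1$ and $1\mapsto 2$. Every vertex of $Y$ other than $1,2l$ is adjacent to $1$, since $2l$ is pendant at $1$ and $Y$ has diameter $2$. So any single vertex relabelling of that kind must send $2,\dots,l$ injectively into $N_{A_{2l-1}}(2)\setminus\{1,l+1\}=\{3,\dots,l\}$. That set has only $l-2$ elements, one too few.

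The paper avoids this by reversing the roles: $2l\mapsto 2$, $1\mapsto l+1$, $2\mapsto l$, and $i\mapsto i+l-1$ for $3\leqslant i\leqslant l$. This is a genuine vertex injection into $\{2\}\cup\{l,\dots,2l-1\}$. Here $\{l,\dots,2l-1\}$ is a clique of $A_{2l-1}$ and $2$ is adjacent to $l+1$. So every image is a copy of $F$ that contains $(2,l+1)$, avoids $1$, and has $2$ as a pendant vertex.

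Non-surjectivity then uses a copy on $\{2,\dots,n\}\cup\{l+1\}$ in which $l+1$ is pendant (possible since $\delta(F)=1$). Its neighbour $2$ is not pendant, since $F$ is connected with $n\geqslant 3$. This is your witness with the roles of $2$ and $l+1$ swapped. With this replacement map your outline goes through.
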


\begin{proof} 
	Obviously, $f_{2l}>0$, since the graph $F_{2l}$ contains at least one subgraph, for example, with the vertex set $1,2,...,n-1,2l$, isomorphic to  $F$ and containing the vertex $2l$.
	
	Next, fix a vertex $i\in \{1,2,...,t\}$ and prove that $f_{2l} <z_{i+1}-z_i$. Consider $2$ cases.
	
	\emph{Case 1}. $t \geqslant 2$.
	
	In Lemmas \ref{lem6} and \ref{lem2}, the following inequalities were proved:
	$$f_{2l} \leqslant n!C_{l-1}^{n-t-1}, \quad z_{i+1}-z_i \geqslant C_{l-2}^{n-2} \quad \forall i \in \{1,2,...,t\}.$$
	And since $C_{l-2}^{n-2}>n!C_{l-1}^{n-t-1}$ by virtue of the assumption of Lemma \ref{lem9}, then the inequality $f_{2l} <z_{i+1}-z_i$ holds for each $i\in \{1,2,...,t\}$.	
	\vspace{3pt}
	
	\emph{Case} $2$. $t=1$.
	
	In this case, we must show that the inequality $f_{2l} < z_2-z_1$ is true.  
	From the proof of Lemma \ref{lem2} it follows that
	$$z_2-z_1=|M|,$$
	where $M$, taking into account the structure of the graph $F_{2l}$, is the set of all subgraphs of $F_{2l}$ that are isomorphic to $F$, contain the edge $(2,l+1)$, and do not contain the vertices $1$, $2l$. Hence,   
	$$f_{2l} < z_2-z_1  \iff |L|<|M|.$$ 
	
	To prove the last inequality, we first note that, by Lemma \ref{lem4}, any subgraph of $L$ contains  the edge $(1,2l)$ and all its vertices belong to the set $\{1,2,...,l\}\cup\{2l\}$.
		
	Next, consider the mapping $h:L\rightarrow M$, which maps each subgraph $H\in L$ to a subgraph of $M$ according to the following rule:
	
	\hspace{100pt}	$2l \rightarrow 2, \quad 1 \rightarrow l+1, \quad 2 \rightarrow l$, if $2 \in V(H) $,
		
	\hspace{100pt} $i \rightarrow i+l-1 \quad \forall i \in V(H)\backslash \{1, 2, 2l\} $,
		
	\hspace{100pt} $(1,2l) \rightarrow (l+1,2)$, \quad $(1,2)\in E(H) \rightarrow (l+1,l)$,
		
	\hspace{100pt} $(1,i) \in E(H) \rightarrow (l+1, i+l-1) $, if $i \notin \{2,2l\}$,
		
	\hspace{100pt} $(2,i) \in E(H) \rightarrow (l,i+l-1)$, if $i \neq 1$, 
		
	\hspace{100pt} $(i,j) \in E(H) \rightarrow (i+l-1, j+l-1)$, if $i,j \notin \{1,2\}$. 
		
	It is straightforward to verify that the mapping $h:L\rightarrow M$ is injective. 		
	Now we prove that $h$ is not surjective. Indeed, the image of any subgraph of $L$ under $h$ is a subgraph of $M$ with a pendant (of degree $1$) vertex $2$. 
	However, $M$ contains a subgraph $M_1$ 
	such that 
	$$V(M_1)=\{2,3,...,n\} \cup \{l+1\}, \,  (2,l+1) \in E(M_1),$$
	$$deg_{M_1} (l+1)=1, \quad deg_{M_1} 2>1.$$			
	The existence of the specified subgraph $M_1$ follows from the facts that $(2,l+1) \in E(F_{2l})$, $\delta (F)=1$, and any distinct vertices from the set $\{2,3,...,n\}$ are adjacent in  $F_{2l}$, as well as from the connectedness of $F$ and the inequality $|F|>2$. Thus, in the subgraph $M_1$, isomorphic to $F$, the vertex $2$, which is adjacent to the pendant vertex $l+1$, is not pendant itself.
	Therefore, $M_1$ 	
	has no preimage in $L$ under $h$. 
	Then the mapping $h:L\rightarrow M$ is injective but not surjective. And since the sets $L$ and $M$ are finite, we obtain the desired inequality $|L|<|M|$.
\end{proof}

\begin{lemma} \label{lem10}  
	Let $C_{l-2}^{n-2}>n!C_{l-2}^{n-t-2}$. For the jump of vertex $l$ in the graph $A_{2l-1}$, the following inequalities hold:
	$$ 0 < \delta_l < z_{i+1}-z_i \quad \forall i \in \{t+1,t+2,...,l-1\}.$$		
\end{lemma}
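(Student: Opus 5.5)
The plan is to prove the two inequalities separately: the lower bound by exhibiting one subgraph, and the upper bound by combining Lemma \ref{lem7} with Lemma \ref{lem2}.

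\emph{Lower bound $\delta_l>0$.} By Assertion \ref{as4}, $\delta_l$ is the number of subgraphs in $L$ containing vertex $l$. So it suffices to exhibit one subgraph of $F_{2l}$ that is isomorphic to $F$ and contains both $2l$ and $l$.

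Since $\delta(F)=t$, we choose a vertex $w$ of $F$ of degree exactly $t$ and send it to $2l$. Its $t$ neighbours go to $1,2,\dots,t$. The remaining $n-t-1\geqslant 1$ vertices of $F$ (using $t\leqslant n-2$ from Assertion \ref{as3}) go to $l$ together with any $n-t-2$ further vertices of $\{t+1,\dots,l-1\}$. This set has size $l-t-1$, which is large enough because $l>n$.

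All vertices $1,\dots,l$ are pairwise adjacent in $F_{2l}$, and $2l$ is adjacent to exactly $1,\dots,t$. Vertex $w$ is adjacent in $F$ only to its $t$ neighbours, and every other pair of vertices of $F$ lands inside the clique $\{1,\dots,l\}$. Hence every edge of $F$ maps to an edge of $F_{2l}$, and the image is a subgraph isomorphic to $F$ containing $2l$ and $l$. Therefore $\delta_l\geqslant 1$.

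\emph{Upper bound.} Fix $i\in\{t+1,\dots,l-1\}$. By Lemma \ref{lem7}, $\delta_l\leqslant n!C_{l-2}^{n-t-2}$. By the hypothesis of the lemma, this is less than $C_{l-2}^{n-2}$. By Lemma \ref{lem2}, $C_{l-2}^{n-2}\leqslant z_{i+1}-z_i$, and Lemma \ref{lem2} applies since $i\leqslant l-1$. Chaining these gives $\delta_l<z_{i+1}-z_i$.

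The only point needing care is the positivity construction: we must check that the edge count and the adjacencies of $2l$ are respected. Choosing the image of the degree-$t$ vertex to be $2l$ handles this, and the remaining vertices sit in a clique, so any edges of $F$ are available there. The rest of the argument is a direct chaining of earlier lemmas.
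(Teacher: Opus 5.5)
Your proof is correct and follows the paper's argument. Positivity comes from an explicit copy of $F$ with a minimum-degree vertex at $2l$ and all other vertices inside the clique $\{1,\dots,l\}$, and the upper bound chains Lemma~\ref{lem7}, the hypothesis, and Lemma~\ref{lem2}. The only difference is that the paper's witness $L_1$ is also required to contain the edge $(1,l)$; that property is not needed here, but it is reused later in Lemma~\ref{lem11}.
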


\begin{proof} 
	We first prove that $\delta_l>0$.
		
	According to Assertion \ref{as4}, it suffices to show that there exists a subgraph from the set $L$ containing vertex $l$. An example of such a subgraph is $L_1$, which is isomorphic to $F$ and satisfies the following conditions:
	$$V(L_1)=\{1,2,...,n-2\} \cup \{l,2l\}, \, 
	deg_{L_1} (2l)=t,$$
	$$(1,2l), (2,2l),..., (t,2l) \in E(L_1), \, (1,l) \in E(L_1) .$$
	The existence of subgraph $L_1$ follows from the facts that $(1,2l), (2,2l),..., (t,2l) \in E(F_{2l})$, $\delta (F)=t$, the graph $F$ is connected, $t \leqslant n-2$, and any distinct vertices from the set $\{1,2,...,n-2\}\cup \{l\}$ are adjacent in the graph $F_{2l}$.
	
	Now we prove that $\delta_l <z_{i+1}-z_i$.
	
	Based on Lemmas \ref{lem7} and \ref{lem2}, we have the following estimates for $\delta_l$ and $C_{l-2}^{n-2}$:
	$$\delta_l \leqslant n!C_{l-2}^{n-t-2}, \quad
	z_{i+1}- z_i \geqslant C_{l-2}^{n-2} \quad \forall i \in \{t+1,t+2,...,l-1\}.$$
	Next, from the assumption  $C_{l-2}^{n-2}>n!C_{l-2}^{n-t-2}$
	of Lemma \ref{lem10}, we obtain that the inequality $\delta_l <z_{i+1}-z_i$ holds for each $i\in \{t+1,t+2,...,l-1\}$.
\end{proof}

\begin{lemma} \label{lem11}  
	Let $t \geqslant 2$, $i\in \{l+1,l+2,...,l+t-1\}$. The jumps of vertices $i$ and $l$ in the graph $A_{2l-1}$ satisfy the inequality $$\delta_i < \delta_l.$$			
\end{lemma}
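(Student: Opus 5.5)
By Assertion \ref{as4} and Lemma \ref{lem4}, for every vertex $v\in V(A_{2l-1})$ the jump $\delta_v$ is the number of subgraphs $Y\in L$ with $v\in V(Y)$, and every such $Y$ lies in $X$. I will compare $\delta_i$ and $\delta_l$ through an injection that is not surjective, as in Case 2 of Lemma \ref{lem9}.

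Fix $i\in\{l+1,\dots,l+t-1\}$. I split $L$ according to membership of $i$ and $l$:
\begin{itemize}
\item $L_{i\bar l}$ is the set of subgraphs of $L$ containing $i$ but not $l$;
\item $L_{\bar i l}$ is the set of subgraphs of $L$ containing $l$ but not $i$;
\item $L_{il}$ is the set of subgraphs of $L$ containing both $i$ and $l$.
\end{itemize}
Then $\delta_i=|L_{i\bar l}|+|L_{il}|$ and $\delta_l=|L_{\bar i l}|+|L_{il}|$. It therefore suffices to prove $|L_{i\bar l}|<|L_{\bar i l}|$.

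First I compare neighborhoods in $X$. Since $i\ge l+1$, the neighbors of $i$ in $X$ are the vertices $j$ with $|i-j|\le l-1$ and $j\le l+t-1$. Because $i>t$, the vertex $i$ is not adjacent to $2l$. This gives
$$N_X[i]=\{i-l+1,\dots,l+t-1\}\subseteq\{1,2,\dots,l+t-1\}=N_X[l],$$
and $i-l+1\ge 2$, so vertex $1$ is not adjacent to $i$. Also $l>t$, so neither $i$ nor $l$ is adjacent to $2l$.

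Define $h:L_{i\bar l}\to L_{\bar i l}$ by replacing the vertex $i$ with $l$. Each edge $(i,j)$ of $Y$ becomes $(l,j)$, and this is an edge of $X$ because $j\in N_X(i)\subseteq N_X(l)$, using $j\neq l$ since $l\notin V(Y)$. All other vertices and edges are kept. The image is isomorphic to $Y$, hence to $F$. It contains $2l$ together with the edges $(1,2l),\dots,(t,2l)$, which are untouched because $i,l>t$. It contains $l$ and not $i$, so $h(Y)\in L_{\bar i l}$. Replacing $l$ back by $i$ recovers $Y$, so $h$ is injective.

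For non-surjectivity I use the subgraph $L_1$ built in the proof of Lemma \ref{lem10}, with $V(L_1)=\{1,\dots,n-2\}\cup\{l,2l\}$ and $(1,l)\in E(L_1)$. Since $n-2<l<i$, it does not contain $i$, so $L_1\in L_{\bar i l}$. Any preimage of $L_1$ under $h$ would have to contain the edge $(1,i)$, which is impossible because $1\notin N_X(i)$. Hence $h$ is injective but not surjective, so $|L_{i\bar l}|<|L_{\bar i l}|$ by finiteness, and therefore $\delta_i<\delta_l$.

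The main obstacle is verifying that $h(Y)$ is a genuine subgraph of $F_{2l}$. This rests entirely on the containment $N_X(i)\setminus\{l\}\subseteq N_X(l)$, together with the fact that $2l$ is adjacent to neither $i$ nor $l$. Note that the hypothesis $t\ge 2$ is needed only so that the set $\{l+1,\dots,l+t-1\}$ is nonempty.
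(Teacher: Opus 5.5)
Your proof is correct, and it rests on the same combinatorial fact as the paper's proof, framed differently.

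The paper's route is as follows:
\begin{enumerate}
\item It deletes the edges $(1,l),\dots,(i-l,l)$ from $X$ to get $X^-$, in which $N_{X^-}[i]=N_{X^-}[l]$.
\item It uses the swap automorphism of $X^-$ to cancel the subgraphs of $X^-$.
\item It concludes $\delta_l-\delta_i=|K|$, where $K$ is the set of subgraphs in $L$ that contain $l$, do not contain $i$, and use at least one deleted edge.
\item It shows $K\neq\emptyset$ using the same $L_1$ you use.
\end{enumerate}

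Your relabeling $h$ is exactly that swap, restricted to $L_{i\bar l}$. The complement of its image in $L_{\bar i l}$ is precisely $K$: a subgraph in $L_{\bar i l}$ has a preimage if and only if it uses no edge $(j,l)$ with $j\le i-l$. So you could upgrade your strict inequality to the paper's exact identity at no extra cost.

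Your version does buy some transparency. Splitting $L$ by membership of $i$ and $l$ makes the cancellation of subgraphs containing both vertices explicit (your set $L_{il}$). The paper leaves that cancellation implicit when it passes from $X^-$ back to $X$. Your argument also needs only the one-sided containment $N_X[i]\subseteq N_X[l]$, rather than an auxiliary graph and an automorphism of it.
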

\begin{proof} 
	Let us construct a lower bound for the difference $\delta_l - \delta_i$. Lemma \ref{lem4} and Assertion \ref{as4} imply that $\delta_i$ and $\delta_l$ are equal to the number of subgraphs of $X$ that are isomorphic to $F$ and contain vertices $i,2l$ and $l,2l$, respectively.
	
	Next, consider the graph $X^-$, which is formed from $X$ by deleting edges $(1,l),(2,l),...,$ $(i-l,l)$. Note that in the graph \(X^-\), the closed neighborhoods of vertices $i$ and $l$ coincide: $$N_{X^-}[i]=N_{X^-}[l] =\{i-l+1,i-l+2,...,l+t-1\}.$$
	Therefore, there exists an automorphism $p:V(X^-)\rightarrow V(X^-)$ such that
	$$p(i)=l, p(l)=i, p(v)=v \quad \forall v \in V(X^-) \backslash \{i, l\}.$$
	Consequently, the vertices $i$ and $l$ belong to the same number of subgraphs of $X^-$ that are isomorphic to $F$ and contain the vertex $2l$. Then the difference $\delta_l -\delta_i$ will be equal to
	$$\delta_l -\delta_i=|K|,$$
	where $K$ is the set of all subgraphs of $X$ that are isomorphic to $F$, contain vertices $l, 2l$, do not contain vertex $i$, and contain at least one edge from the set $\{(1,l),(2,l),...,(i-l,l)\}$. It remains to note that the set $K$ is not empty, since the subgraph $L_1$ from the proof of Lemma \ref{lem10} belongs to $K$. Consequently,
	$\delta_l -\delta_i>0$, which implies the inequality
	$\delta_i < \delta_l $ for each $i\in \{l+1,l+2,...,l+t-1\}$.
\end{proof}	
	\begin{center}
		\section{Proof of the Theorem \ref{th4}}  \label{sec3} 
	\end{center}
	
	\subsection{$\langle n,t \rangle$-condition} 
	
	\begin{definition}\label{def11}  
		
		Let $\langle n,t \rangle$ be an ordered pair of positive integers $n$ and $t$ such that $t \leqslant n-2$. We say that a positive integer $l$ satisfies the $\langle n,t \rangle$-condition if
		\begin{center}
			$ l>(n!+1)(n-1)$ for $t=1$;
			
			\
			
			$l>2t$, $C_{l-2}^{n-2}>n!C_{l-1}^{n-t-1}$ for $t \geqslant 2$.				
		\end{center}	
	\end{definition}
	
	\begin{lemma} \label{lem12}  
		For every ordered pair of positive integers $\langle n,t \rangle$ such that $t \leqslant n-2$, there exist infinitely many positive integers $l$ for which the $\langle n,t \rangle$-condition holds.	
	\end{lemma}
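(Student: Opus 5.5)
The plan is to split into the cases $t=1$ and $t\geqslant 2$ of Definition \ref{def11}, and in each case show that all sufficiently large $l$ satisfy the $\langle n,t \rangle$-condition. Infinitude then follows at once.

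For $t=1$ there is nothing to prove beyond noting that the condition is the single linear inequality $l>(n!+1)(n-1)$. Every integer $l\geqslant (n!+1)(n-1)+1$ satisfies it, so there are infinitely many such $l$.

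For $t\geqslant 2$ the requirement $l>2t$ is again satisfied by all large $l$. It remains to compare $C_{l-2}^{n-2}$ and $n!C_{l-1}^{n-t-1}$ as functions of $l$ with $n,t$ fixed. For $l-2\geqslant n-2$ the left side is a polynomial in $l$ of degree $n-2$ with positive leading coefficient $1/(n-2)!$. The right side is $n!$ times a polynomial of degree $n-t-1\leqslant n-3$, since $t\geqslant 2$; if $n-t-1=0$ it is the constant $n!$.

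To avoid asymptotic hand-waving, I would give explicit bounds:
$$C_{l-2}^{n-2}\geqslant \frac{(l-n+1)^{n-2}}{(n-2)!},\qquad C_{l-1}^{n-t-1}\leqslant \frac{(l-1)^{n-t-1}}{(n-t-1)!}\leqslant (l-1)^{n-3}.$$
Both bounds hold for $l\geqslant n$. The condition then reduces to
$$(l-n+1)^{n-2}> n!\,(n-2)!\,(l-1)^{n-3}.$$
Writing $l-1=(l-n+1)+(n-2)$ and taking $l-n+1\geqslant n-2$ gives $(l-1)^{n-3}\leqslant 2^{n-3}(l-n+1)^{n-3}$. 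So it suffices to have
$$l-n+1> 2^{n-3}\,n!\,(n-2)!.$$
This holds for all $l$ beyond an explicit threshold. Combined with $l>2t$, every $l$ above the maximum of the thresholds satisfies the condition.

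The only point needing care is the degree comparison, namely that $t\geqslant 2$ forces $n-t-1<n-2$. The case $n-t-1=0$ is harmless, since the right side is then the constant $n!$. Note also that $t\leqslant n-2$ gives $n\geqslant t+2\geqslant 4$ here, so $n-3\geqslant 1$ and the estimates above are meaningful.
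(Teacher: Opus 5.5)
Your proof is correct and follows the same strategy as the paper. The case $t=1$ is immediate. For $t\geqslant 2$, all sufficiently large $l$ work because of a growth comparison in $l$. The paper cancels factorials to get the exact equivalent form $(l-n+1)\cdots(l-n+t)>\frac{n!(n-2)!}{(n-t-1)!}(l-1)$, which is degree $t$ against degree $1$, and takes a non-explicit $x_0$. Your separate binomial estimates give up exactness but yield the explicit threshold $l>n-1+2^{n-3}n!(n-2)!$, which also implies $l>2t$.

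One trivial remark: the case $n-t-1=0$ that you guard against cannot occur, since $t\leqslant n-2$ gives $n-t-1\geqslant 1$.
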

	
	\begin{proof} 
	Fix an ordered pair of positive integers $\langle n,t \rangle$  with $t \leqslant n-2$.
		
	If $t=1$, the statement of Lemma \ref{lem12} is evident.
		
	Now suppose \(t\geqslant 2\). For fixed  positive integers $n,t$ and for arbitrary positive integer $l>n \geqslant t+2$, we have equivalence 
	$$C_{l-2}^{n-2}>n!C_{l-1}^{n-t-1} \iff
	\frac{(l-2)!}{(n-2)!(l-n)!}> n!
	\frac{(l-1)!}{(n-t-1)!(l-n+t)!} \iff$$
	$$(l-n+1)(l-n+2)...(l-n+t)-\frac{n!(n-2)!}{(n-t-1)!}(l-1)>0.$$
	
	Next, consider the polynomial $P(x)$ of degree $t \geqslant 2$ defined by 		
	$$P(x)=(x-n+1)(x-n+2)...(x-n+t)-\frac{n!(n-2)!}{(n-t-1)!}(x-1).$$  			
	Since the leading coefficient of $P(x)$ is positive (equal to $1$), there exists a  real number $x_0$ such that for all $x>x_0$ the polynomial $P(x)$ takes only positive values. In particular, for every  positive integer $l>x_0$,  the inequality 
	$$P(l)=(l-n+1)(l-n+2)...(l-n+t)-\frac{n!(n-2)!}{(n-t-1)!}(l-1)>0.$$
	holds. Hence, for each positive integer $l$ such that $l>x_0$ and $l>n$, the inequality 
	$$C_{l-2}^{n-2}>n!C_{l-1}^{n-t-1}$$
	is satisfied. It follows that any positive integer $l$ with $l>max(x_0, n, 2t)$  
	satisfies the $\langle n,t \rangle$-condition.
	To complete the proof, it remains to note that there are infinitely many positive integers  $l$ of this type. 	
	\end{proof}
	
	\begin{lemma} \label{lem13}  
		Let $\langle n,t \rangle$ be an ordered pair of positive integers $n$ and $t$ such that $t \leqslant n-2$. If a positive integer $l$ satisfies the $\langle n,t \rangle$-condition, then the following inequalities hold:
		\vspace{5pt}
		
		$ 1) \: l>n $;
		
		\vspace{5pt}
		
		$ 2) \: C_{l-1}^{n-1}>n!C_{l-1}^{n-t-1}$;
		
		\vspace{5pt}
		
		$ 3) \: C_{l-2}^{n-2}>n!C_{l-2}^{n-t-2}$.
		
	\end{lemma}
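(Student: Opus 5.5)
We split according to the two clauses of the $\langle n,t \rangle$-condition (Definition \ref{def11}) and in each case derive $1)$--$3)$ by elementary manipulations of binomial coefficients. Throughout we use that $F$ has diameter $2$, so $n\geqslant 3$, and that $0\leqslant n-t-2$.

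\emph{Case $t=1$.} Here $l>(n!+1)(n-1)\geqslant n$ gives $1)$ immediately. For $2)$, with $t=1$ the inequality reads $C_{l-1}^{n-1}>n!\,C_{l-1}^{n-2}$. Since $l>n$, we have
$$C_{l-1}^{n-1}=\frac{l-n+1}{n-1}\,C_{l-1}^{n-2}, \qquad C_{l-1}^{n-2}>0.$$
So $2)$ is equivalent to $l-n+1>n!(n-1)$, that is, $l>(n!+1)(n-1)$, which is exactly the hypothesis. For $3)$ we similarly use
$$C_{l-2}^{n-2}=\frac{l-n+1}{n-2}\,C_{l-2}^{n-3}.$$
Then $l-n+1>n!(n-1)>n!(n-2)$ gives $C_{l-2}^{n-2}>n!\,C_{l-2}^{n-3}$, which is $3)$ for $t=1$.

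\emph{Case $t\geqslant 2$.} The hypothesis $l>2t$ alone does not give $l>n$, so for $1)$ we argue from the binomial inequality instead; I expect this to be the only delicate point. If $l-2<n-2$, then $C_{l-2}^{n-2}=0$ while the right-hand side $n!\,C_{l-1}^{n-t-1}$ is nonnegative, contradicting the strict inequality. If $l=n$, then $C_{l-2}^{n-2}=1$, while $n!\,C_{n-1}^{n-t-1}\geqslant n!>1$, again a contradiction. Hence $l>n$.

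For $2)$, since $l>n$ we have $C_{l-1}^{n-1}=\frac{l-1}{n-1}\,C_{l-2}^{n-2}\geqslant C_{l-2}^{n-2}$. Combining this with the hypothesis gives $C_{l-1}^{n-1}>n!\,C_{l-1}^{n-t-1}$.

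For $3)$, Pascal's rule (valid here since $l-2\geqslant n-t-1\geqslant 1$) gives
$$C_{l-1}^{n-t-1}=C_{l-2}^{n-t-1}+C_{l-2}^{n-t-2}\geqslant C_{l-2}^{n-t-2}.$$
Therefore $C_{l-2}^{n-2}>n!\,C_{l-1}^{n-t-1}\geqslant n!\,C_{l-2}^{n-t-2}$, which is $3)$.
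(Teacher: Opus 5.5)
Your proof is correct and follows essentially the same route as the paper. It uses the same case split on $t$, the same zero/$l=n$ contradiction for $1)$ when $t\geqslant 2$, and the same ratio identities for $2)$; the only difference is that Pascal's rule replaces the paper's ratio computation $C_{l-1}^{n-t-1}\geqslant C_{l-2}^{n-t-2}\iff l\geqslant n-t$ in the last step. One small point: the lemma is stated for an abstract pair $\langle n,t\rangle$, so justify $n\geqslant 3$ by $1\leqslant t\leqslant n-2$ rather than by the diameter of $F$.
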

	
	\begin{proof} 
		Suppose that $l$ satisfies the $\langle n,t \rangle$-condition. From the definition of the pair $\langle n,t \rangle$, it follows that $n\geqslant 3$.
		
		We prove $1)$. For $t=1$, $n\geqslant 3$,  the $\langle n,1 \rangle$-condition implies $$ l>(n!+1)(n-1)>n.$$
		If $t \geqslant 2$, then by the $\langle n,t \rangle$-condition we have $C_{l-2}^{n-2}>n!C_{l-1}^{n-t-1},$ which implies $C_{l-2}^{n-2}>0$,  
		\vspace{4pt}
		
		\hspace{-21pt} and  thus  $l\geqslant n$. 
		If $l=n$, then the inequality $C_{l-2}^{n-2}>n!C_{l-1}^{n-t-1}$ becomes
		$1>n!C_{n-1}^{n-t-1}$,  
		\vspace{4pt}
		
		\hspace{-21pt} which is false, since $C_{n-1}^{n-t-1}>1$ for all positive integers $n$ and $t \leqslant n-2$. Therefore, $l>n$ for $t \geqslant 2$.	
			
		Due to the validity of inequality $1)$, in the following proofs of $2)$-$3)$, we  assume that $l,n,t$ are positive integers such that $l>n\geqslant t+2$.
		\vspace{5pt}
		
		Let us prove $2)$. If $t=1$, then the inequality $C_{l-1}^{n-1}> n!C_{l-1}^{n-2}$ holds, since it is equivalent to the inequality from the $\langle n,1 \rangle$-condition:
		$$C_{l-1}^{n-1}> n!C_{l-1}^{n-2} \iff \frac{(l-1)!}{(n-1)!(l-n)!}> n!
		\frac{(l-1)!}{(n-2)!(l-n+1)!} \iff$$
		$$l>(n!+1)(n-1).$$
		
		If $t \geqslant 2$, then $C_{l-2}^{n-2}>n!C_{l-1}^{n-t-1}$ in accordance with the $\langle n,t \rangle$-condition, and to prove 
		\vspace{4pt}
		
		\hspace{-21pt}
		the inequality $C_{l-1}^{n-1}>n!C_{l-1}^{n-t-1}$, it suffices to show that
		$$C_{l-1}^{n-1} \geqslant C_{l-2}^{n-2}.$$
		This inequality holds because it is equivalent to the true statement:
		$$C_{l-1}^{n-1} \geqslant C_{l-2}^{n-2} \iff \frac{(l-1)!}{(n-1)!(l-n)!}\geqslant
		\frac{(l-2)!}{(n-2)!(l-n)!}
		\iff l \geqslant n.$$
		\vspace{1pt}
		
		We prove $3)$. For $t=1$, the inequality $C_{l-2}^{n-2}> n!C_{l-2}^{n-3}$ is equivalent to the following:
		$$C_{l-2}^{n-2}> n!C_{l-2}^{n-3} \iff \frac{(l-2)!}{(n-2)!(l-n)!}> n!
		\frac{(l-2)!}{(n-3)!(l-n+1)!} \iff$$ 
		$$l>n!(n-2)+n-1.$$		
		Next, from the  $\langle n,1 \rangle$-condition, we have
		$$ l>(n!+1)(n-1)=n!(n-1)+n-1>n!(n-2)+n-1,$$
		which implies $l>n!(n-2)+n-1$, and hence  $C_{l-2}^{n-2}> n!C_{l-2}^{n-3}$.
		\vspace{3pt}
		
		If $t \geqslant 2$, the desired  inequality follows from the $\langle n,t \rangle$-condition and the auxiliary inequality
		$$C_{l-1}^{n-t-1} \geqslant C_{l-2}^{n-t-2},$$
		whose truth is established by the following equivalent transformations:
		$$C_{l-1}^{n-t-1} \geqslant C_{l-2}^{n-t-2} \iff \frac{(l-1)!}{(n-t-1)!(l-n+t)!}\geqslant
		\frac{(l-2)!}{(n-t-2)!(l-n+t)!} \iff$$
		$$l \geqslant n-t.$$	
	\end{proof}
	
	\subsection{Proof of the Theorem \ref{th4}} 
	
	\
			
	Consider an arbitrary graph $F$ of diameter $2$. Let $n=|F|$, $t=\delta (F)$. From Assertion \ref{as3} it follows that for positive integers $n,t$ the inequality $t \leqslant n-2$ holds. Choose a positive integer $l$ that satisfies the $\langle n,t \rangle$-condition. Then, by Lemma \ref{lem13}, we have $l>n$, hence for this chosen  $l$ there exists a graph $F_{2l}$ on $2l$ vertices, whose description is given  in section \ref{sec2}. Additionally, note that from the $\langle n,t \rangle$-condition it follows that $l>2t$ for  every positive integer $t$.
	Furthermore, by Lemma \ref{lem3}, the diameter of the graph $F_{2l}$ is equal to $3$.
	\vspace{3pt}
	
	Now we  prove that $F_{2l}$ is $F$-irregular. To compare the $F$-degrees of vertices in $F_{2l}$, we turn to the following visualization idea for  the transformation of the graph $A_{2l-1}$ into the graph $F_{2l}$.	
	\vspace{3pt}
		
	Consider the graph $A_{2l-1}$. Color the vertices  $1,2,...,t$ of this graph red, the vertices $t+1,t+2,...,l$ blue, the vertices $l+t,l+t+1,...,(2l-1)$ black, and the vertices $l+1,l+2,...,(l+t-1)$ green (when $t \geqslant 2$).
	\vspace{3pt}
		
	Based on Lemma \ref{lem1} and Corollary \ref{cor1}, all vertices of $A_{2l-1}$ can be placed on exactly $l$ floors such that the height of each floor numerically coincides with the $F$-degree of every vertex located on that floor. Moreover, if we number the floors  $1, 2,..., l$ from bottom to top, then for every $i \in \{1,2,...,l-1\}$, the vertices $i$ and $2l-i$ lie on the $i$-th floor, and the vertex $l$ lies on the $l$-th floor. By construction, for any $i \in \{1,2,...,l-1\}$, the distance between floors $i+1$ and $i$ equals $z_{i+1}-z_i$.
	\vspace{3pt}
	
	Next, we  analyze how the $F$-degrees of vertices in the graph $A_{2l-1}$ change when transitioning to the graph $F_{2l}$ (see Figure  4). 
	
	\begin{figure}[h!]
		\centering
		\includegraphics[width=13.5cm]{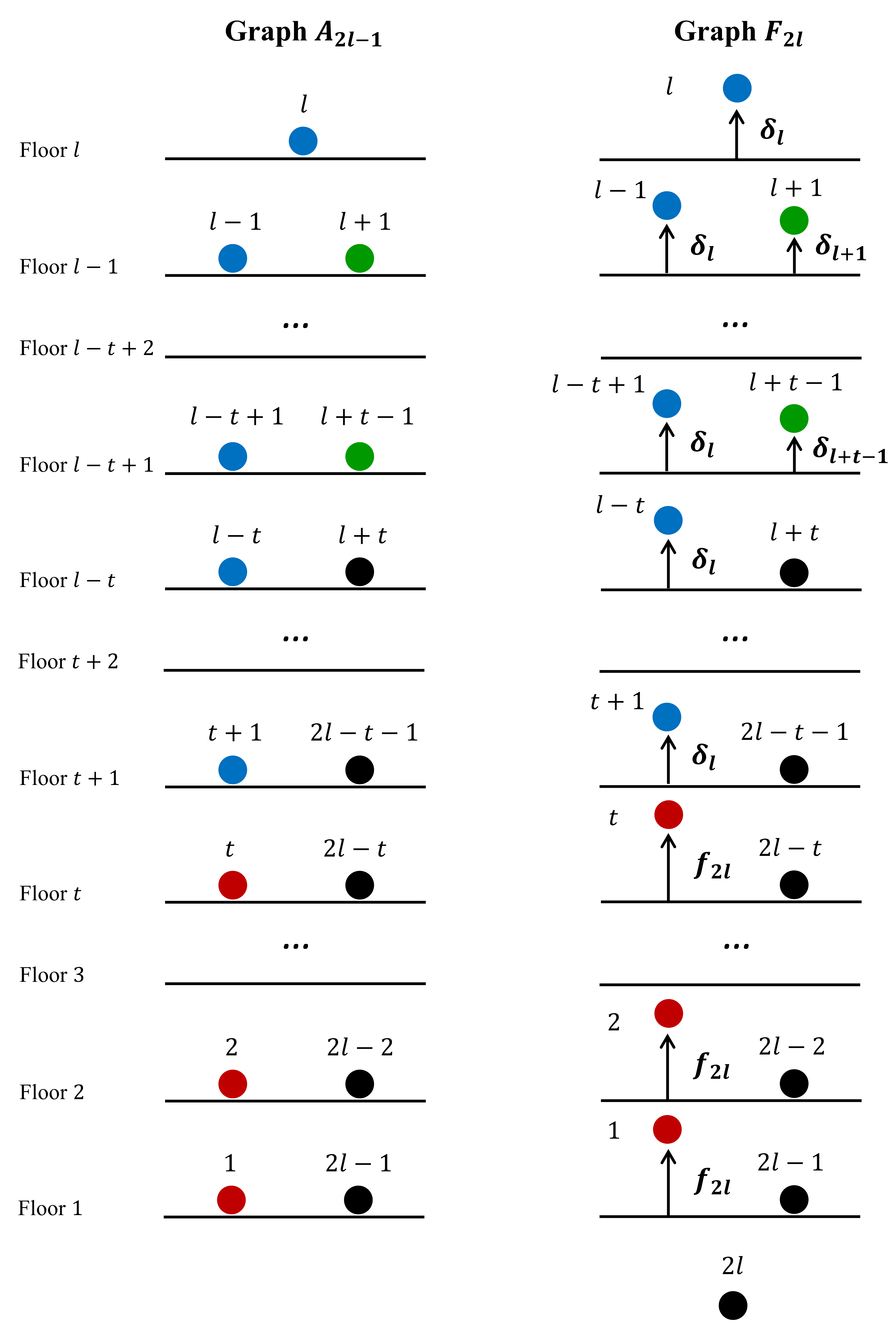}
		\caption{Diagram illustrating the evolution of $F$-degrees
			of vertices in graph $A_{2l-1}$ when transitioning to graph $F_{2l}$.}
		\label{fig4}
	\end{figure}		
	\vspace{3pt}
		
	Consider the floor numbered $i \in \{1,2,...,t\}$. Since
	$l>2t$, on this floor there is the red vertex $i \leqslant t$ and the black vertex $2l-i > l+t$. By Lemmas \ref{lem5} and \ref{lem9}, in the transition from  $A_{2l-1}$ to  $F _{2l}$ the red vertex $i$  "jumps" up by height  $\delta_i=f_{2l}>0$, but does not reach the next floor $(f_{2l}<z_{i+1}-z_i)$, whereas the black vertex $2l-i$ remains fixed ($\delta_{2l-i}=0$).
    \vspace{3pt}
    
	Let $i \in \{t+1,t+2,...,l-t\}$. Since $l>2t$, on the $i$-th floor there is the blue vertex $i$ with $t+1 \leqslant i \leq l-t$, and the black vertex $2l-i \geqslant l+t$. According to Lemmas \ref{lem5}, \ref{lem10}, \ref{lem13}, as a result of transforming $A_{2l-1}$ into $F_{2l}$, the blue vertex $i$  "jumps" by $\delta_i=\delta_l>0$, but does not reach the next floor, since $\delta_l<z_{i+1}-z_i$, while the black vertex $2l-i$ on the same floor does not change its position ($\delta_{2l-i}=0$).
	\vspace{3pt}
	
	If $t \geqslant 2$ and $i \in \{l-t+1,l-t+2,...,l-1\}$, then, taking into account the inequality $ l>2t$, on the $i$-th floor there are vertices $i$ and $2l-i$ of blue and green colors, respectively. 
	When transitioning to $F_{2l}$, based on Lemmas \ref{lem5}, \ref{lem10}, \ref{lem11}, \ref{lem13}, the blue vertex $i$ "jumps" by $\delta_i=\delta_l>0$, but does not reach the next floor, while the jump of the corresponding green vertex $2l-i$ satisfies the inequality $\delta_{2l-i}<\delta_l=\delta_i$, hence in   $F _{2l}$ the green vertex $2l-i$ is located below the blue vertex $i$. Moreover, from Assertion \ref{as4} it follows that $\delta_{2l-i} \geqslant 0$, so the green vertex $2l-i$ in  $F _{2l}$ lies no lower than the $i$-th floor.
	\vspace{3pt}
	
	Finally, by Lemma \ref{lem10}, the vertex $l$ in  $F_{2l}$ is located above the floor numbered $l$ ($\delta_l>0$), and by Lemmas \ref{lem8}, \ref{lem13}, the vertex $2l$ must be placed below the first floor ($f_{2l}<z_1$).
	\vspace{3pt}
					
	As a result, all vertices of  $F_{2l}$  end up at distinct heights. Consequently, the $F$-degrees of  vertices in  $F_{2l}$ are pairwise distinct,  hence $F_{2l}$ is an $F$-irregular graph.
	\vspace{3pt}
		
	To complete the proof, it remains to note that, by Lemma \ref{lem12}, there exist infinitely many positive integer values of $l$ for which the $\langle n,t \rangle$-condition holds.
	Thus, the set of all graphs $\{F_{2l}\}$, where the positive integer $l$ satisfies the $\langle n,t \rangle$-condition, forms an infinite series of $F$-irregular graphs of diameter $3$. 
	
	Theorem \ref{th4}  is proved.
	
	\begin{remark} \label{rem1} Note that in proving the $F$-irregularity of graph $F_{2l}$, we operate with only three characteristics of graph $F$: its order, diameter, and the minimum vertex degree. Therefore, if a positive  integer $l$ satisfies the $\langle n,t \rangle$-condition, where $n=|F|$, $t=\delta(F)$, and $F$ is a graph of diameter $2$, then $F_{2l}$ is $H$-irregular for any graph $H$ of diameter $2$ whose order is $|H|=|F|=n$ and  minimum vertex degree $\delta (H)=\delta (F)=t$. 
	\end{remark}

	\begin{center}
		\section{Conclusion} \label{sec4} 
	\end{center}
	
	In this article, for every graph $F$ of diameter $2$, we prove the existence of infinitely many $F$-irregular graphs of diameter $3$. Thus, the strong conjecture about  $F$-irregular graphs is confirmed within the class of graphs $\{F\}$ of diameter $2$.

	We hope that the proposed constructions and techniques for comparing $F$-degrees of vertices will be useful for finding a general approach to proving the strong conjecture about $F$-irregular graphs.
	
	Furthermore, as noted in Remark \ref{rem1}, there exist graphs that possess the $F$-irregularity property for different graphs $F$. On this basis, we propose to extend the concept of $F$-irregular graphs by considering a new class of graphs --  $n$-\emph{hyper-irregular graphs}. \\
		
	\begin{definition}\label{def12}  
		Let $n \geqslant 3$ be an integer. A graph $G$ is called $n$-hyper-irregular if it is $F$-irregular for every graph $F$ (not necessarily connected)  with  order $|F| \leqslant n$ and size $|E(F)| \geqslant 2$.
	\end{definition}
	
	We also put forward a new conjecture about $F$-irregular graphs, stronger than Conjecture \ref{con2}.	
		
	\begin{conjecture} \label{con3} \emph{(Super-strong conjecture on  $F$-irregular graphs)}
		For each integer $n\geqslant 3$, there exist infinitely many $n$-hyper-irregular graphs.
	\end{conjecture}

\end{document}